\newtheorem{theorem}{Theorem}[section]
\newtheorem*{theorem*}{Theorem}
\newtheorem{lemma}{Lemma}[section]
\newtheorem{proposition}{Proposition}[section]
\theoremstyle{definition}
\theoremstyle{remark}
\newtheorem{remark}{Remark}[section]
\numberwithin{equation}{section}
\newcommand{\R}{\ensuremath{\mathbb{R}}}
\newcommand{\N}{\ensuremath{\mathbb{N}}}
\newcommand{\id}{\ensuremath{\mathrm{Id}}}
\newcommand{\sd}{\ensuremath{\mathrm{sym}\nabla}}
\newcommand{\sym}{\ensuremath{\mathrm{sym}}}
\newcommand{\curl}{\mathrm{curl}}
\renewcommand{\S}{\ensuremath{\mathcal{S}}}
\begin{document}

\title[one third]
{$C^{1,\sfrac{1}{3}-}$ very weak solutions to the two dimensional Monge--Amp\`ere equation}

\author[W. Cao, J. Hirsch \& D. Inauen]{Wentao Cao, Jonas Hirsch, and Dominik Inauen}

\address{Wentao Cao, Academy for Multidisciplinary Studies, Capital Normal University, West 3rd Ring North Road 105, Beijing, 100048 P.R. China. E-mail:{\tt cwtmath@cnu.edu.cn}}
\address{Jonas Hirsch, Institut f\"{u}r Mathematik, Universit\"{a}t Leipzig, D-04109, Leipzig, Germany.  E-mail:{\tt jonas.hirsch@math.uni-leipzig.de}}
\address{Dominik Inauen, Institut f\"{u}r Mathematik, Universit\"{a}t Leipzig, D-04109, Leipzig, Germany.  E-mail:{\tt dominik.inauen@math.uni-leipzig.de}}

\begin{abstract}
For any $\theta<\frac{1}{3}$, we show that very weak solutions to the two-dimensional Monge--Amp\`ere equation with regularity $C^{1,\theta}$ are dense in the space of continuous functions. This result is shown by a convex integration scheme involving a subtle decomposition of the defect at each stage. The decomposition diagonalizes the defect and, in addition, incorporates some of the leading-order error terms of the first perturbation, effectively reducing the required amount of perturbations to one. 
\end{abstract}

\date{\today}



\maketitle 
\section{Introduction} This work is concerned with very weak solutions of the Monge--Amp\`ere equation on a simply connected, bounded open set $\Omega\subset\R^{2}$
\begin{equation}\label{e:MA}
\mathcal D\text{et} D^{2}v = f  \text{ in } \Omega \,.
\end{equation}
Here, $\mathcal D\text{et} D^{2}v $ is the distribution 
\[\mathcal D\text{et} D^{2}v := \partial^{2}_{12}\left (\partial_{1} v\partial_{2}v\right ) -\frac{1}{2}\partial_{22}^{2}\left ((\partial_1v)^{2}\right ) -\frac{1}{2}\partial_{11}^{2}\left ((\partial_2v)^{2}\right )=-\frac{1}{2}\mathrm{curl \,curl}(\nabla v\otimes \nabla v)\,,\]
introduced in \cite{Iwaniec} as the \emph{very weak Hessian}. It is well defined for $v\in W^{1,2}_{loc}(\Omega)$ and a quick approximation argument shows that it agrees with $\det D^2v$ for $v \in W^{2,2}_{loc}(\Omega)$. As in \cite{CaoSz,LewickaPakzad}, distributional solutions to \eqref{e:MA} are called \emph{very weak solutions} of the Monge-Amp\`ere equation.
The following is our main result. 
\begin{theorem}\label{t:main} Let $\Omega\subset \R^{2}$ be an open, bounded, simply connected and smooth domain and $f\in L^{p}(\Omega)$ for $p\ge \frac{3}{2}$. For any $\theta<\frac{1}{3}$ the set of $C^{1,\theta}(\bar\Omega)$ solutions of \eqref{e:MA} is dense in $C^{0}(\bar \Omega)$. More precisely, for any $\underline {v}\in C^{0}(\bar \Omega)$ and $\epsilon >0$ there exists  distributional solution $\overline{v}\in C^{1,\theta}(\bar \Omega)$ of \eqref{e:MA} satisfying 
\[ \|\underline {v} - \overline {v}\|_{0}<\epsilon\,.\]
\end{theorem}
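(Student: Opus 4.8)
The plan is to recast \eqref{e:MA} as a differential inclusion and solve it by convex integration, in the spirit of Lewicka--Pakzad and Cao--Sz\'ekelyhidi. Since $\mathcal D\text{et}D^{2}v=-\tfrac12\curl\curl(\nabla v\otimes\nabla v)$ and $\Omega$ is simply connected, I first solve the linear equation $\curl\curl A_{0}=-f$ for a symmetric matrix field $A_{0}$; by elliptic regularity and $f\in L^{p}$, $p\ge\tfrac32$, this gives $A_{0}\in W^{2,p}(\Omega)\hookrightarrow C^{0,2\theta_{0}}(\bar\Omega)$ with $2\theta_{0}=2-\tfrac2p\ge\tfrac23$, and it is exactly this H\"older exponent that the scheme needs in order to let $\theta$ approach $\tfrac13$. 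By the Saint-Venant compatibility theorem a symmetric field on a simply connected domain with vanishing $\curl\curl$ is a symmetrized gradient, so $v$ solves \eqref{e:MA} if and only if there is a vector field $w$ with $\tfrac12\nabla v\otimes\nabla v+\sd w=A_{0}$. Now mollify $\underline v$ to a smooth $v_{0}$ with $\|\underline v-v_{0}\|_{0}<\epsilon/2$, set $w_{0}=0$, and replace $A_{0}$ by $e:=A_{0}+r\,\id$ with $r$ large (which changes neither $\curl\curl A_{0}$ nor the equation); then for $r\gg\|A_{0}\|_{0}+\|\nabla v_{0}\|_{0}^{2}$ the initial \emph{defect} $D_{0}:=e-\tfrac12\nabla v_{0}\otimes\nabla v_{0}-\sd w_{0}$ satisfies $\tfrac12 M_{0}\,\id\le D_{0}\le M_{0}\,\id$ with $M_{0}:=r$.

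The theorem then follows from a one-stage proposition. Fix $\theta<\theta'<\tfrac13$, take $b>1$ close to $1$ and $a\gg1$, and set $\lambda_{q}=a^{(b^{q})}$, $\delta_{q}=M_{0}(\lambda_{q}/\lambda_{1})^{-2\theta'}$, so $\delta_{1}=M_{0}$. The proposition asserts: if $(v_{q},w_{q})$ has frequencies $\lesssim\lambda_{q}$, obeys $\|v_{q}\|_{2}\lesssim\delta_{q}^{1/2}\lambda_{q}$ and analogous bounds for $w_{q}$, and its defect $D_{q}:=e-\tfrac12\nabla v_{q}\otimes\nabla v_{q}-\sd w_{q}$ satisfies $\tfrac14\delta_{q+1}\,\id\le D_{q}\le\delta_{q+1}\,\id$ and $\|D_{q}\|_{1}\lesssim\delta_{q+1}\lambda_{q}$, then there exists $(v_{q+1},w_{q+1})$ with the same estimates at level $q+1$, together with $\|v_{q+1}-v_{q}\|_{1}\lesssim\delta_{q+1}^{1/2}$ and $\|v_{q+1}-v_{q}\|_{0}\lesssim\delta_{q+1}^{1/2}\lambda_{q+1}^{-1}$. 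Iterating this from $(v_{0},w_{0})$ and interpolating, $\|v_{q+1}-v_{q}\|_{1,\theta}\lesssim\|v_{q+1}-v_{q}\|_{1}^{1-\theta}\|v_{q+1}-v_{q}\|_{2}^{\theta}\lesssim\delta_{q+1}^{1/2}\lambda_{q+1}^{\theta}\lesssim M_{0}^{1/2}\lambda_{1}^{\theta'}\lambda_{q+1}^{\theta-\theta'}$, which is summable since $\theta<\theta'$; hence $v_{q}\to\overline v$ in $C^{1,\theta}(\bar\Omega)$ and likewise $w_{q}\to\overline w$. Because $D_{q}\to0$ uniformly, $\tfrac12\nabla\overline v\otimes\nabla\overline v+\sd\overline w=e$, so $-\tfrac12\curl\curl(\nabla\overline v\otimes\nabla\overline v)=-\curl\curl e=-\curl\curl A_{0}=f$ in $\mathcal D'(\Omega)$, i.e.\ $\overline v$ is a very weak solution of \eqref{e:MA}. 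Finally $\|\overline v-v_{0}\|_{0}\le\sum_{q\ge0}\|v_{q+1}-v_{q}\|_{0}\lesssim M_{0}^{1/2}\lambda_{1}^{\theta'}\sum_{q\ge1}\lambda_{q}^{-1-\theta'}\lesssim M_{0}^{1/2}\lambda_{1}^{-1}<\epsilon/2$ once $a$ is large, so $\|\underline v-\overline v\|_{0}<\epsilon$.

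The construction of one stage is the technical core, and here the announced decomposition enters. First mollify $(v_{q},w_{q})$ and $e$ at a scale $\ell\asymp\lambda_{q}^{-1}$, producing $(v_{\ell},w_{\ell},e_{\ell})$ with controlled higher derivatives and a defect $\widetilde D_{q}$ that differs from $D_{q}$ by an error $\lesssim\delta_{q+2}$ — this is one of the two places where the hypothesis $p\ge\tfrac32$, i.e.\ $e\in C^{0,2\theta_{0}}$ with $2\theta_{0}\ge\tfrac23$, is spent. A naive scheme would now write the positive-definite $\widetilde D_{q}$ as a sum of three symmetric rank-one matrices along fixed directions and cancel them by three successive Nash-type corrugations $v\mapsto v+\tfrac1\mu\Gamma(\mu\,x\cdot\xi)$, each accompanied by a correction of $w$ designed to kill the cross term $\mathrm{sym}(\nabla v_{\ell}\otimes\nabla(\tfrac1\mu\Gamma(\mu\,x\cdot\xi)))$; the error terms left by those $w$-corrections (unlike in the isometric embedding problem, where an orthogonality makes them lower order) then confine $\theta$ to a fixed value bounded away from $\tfrac13$, no matter how $b$ is chosen. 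Instead one uses a \emph{diagonalizing} decomposition: via a partition of unity, $\widetilde D_{q}=\sum_{i=1}^{2}a_{i}^{2}\,\xi_{i}\otimes\xi_{i}$ with $\{\xi_{i}\}$ an orthonormal, slowly varying eigenframe ($\|\nabla\xi_{i}\|_{0}\lesssim\lambda_{q}$) and $a_{i}^{2}\asymp\delta_{q+1}$, so only two directions occur; and, decisively, the leading-order error produced by the corrugation along $\xi_{1}$ — the next term in the expansion of $\nabla(v_{\ell}+\tfrac{a_{1}}{\mu}\Gamma_{1}(\mu\,x\cdot\xi_{1}))^{\otimes2}$ together with the $\sd$ of its companion $w$-correction — is reabsorbed into the amplitude and direction of a \emph{recomputed} second piece $\tilde a_{2}^{2}\,\tilde\xi_{2}\otimes\tilde\xi_{2}$, so that the second corrugation simultaneously cancels the second eigendirection and these errors. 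Both corrugations can therefore be performed at the single frequency $\mu\asymp\lambda_{q+1}$ — effectively one perturbation per stage — and the binding constraint becomes the Onsager-type bound $\theta'\lesssim\tfrac1{2b+1}$, which tends to $\tfrac13$ as $b\to1$. The $C^{1}$, $C^{2}$ and defect estimates at level $q+1$ then follow from the explicit form of the corrugations and routine bounds on the quadratic, commutator and mollification errors.

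The main obstacle is precisely to carry out this single-frequency cancellation so that it closes \emph{with the sharp exponent}. Two points demand real care. First, the errors caused by the $x$-dependence of the eigenframe: one must work on a partition of unity fine enough that $\xi_{1},\xi_{2}$ are essentially constant on each piece, use $\|\nabla\xi_{i}\|_{0}\lesssim\|\widetilde D_{q}\|_{1}/\|\widetilde D_{q}\|_{0}\lesssim\lambda_{q}$ to keep the resulting terms within budget, and deal with the degeneracy where the two eigenvalues of $\widetilde D_{q}$ coalesce (there the two rank-one pieces are nearly equal, which must be exploited so that the frame may be chosen harmlessly). Second, the delicate bookkeeping of exactly which of the many terms generated by the first corrugation may legitimately be folded into the recomputed second piece without creating new, uncontrolled self- or cross-interaction terms at frequency $\lambda_{q+1}$; getting this balance exactly right is what pushes the attainable regularity up to, but not including, $C^{1,1/3}$, and, together with the mollification of the merely $C^{0,2\theta_{0}}$ datum $e$, is why $p=\tfrac32$ is the natural threshold. (That $v_{0}$ may have a large $C^{1}$ norm is harmless, since the conclusion only requires $C^{0}$-closeness and all later corrections are summable.)
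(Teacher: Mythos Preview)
Your high-level framework --- reformulation via $\curl\curl$, elliptic regularity giving $A_0\in C^{0,\gamma}$ with $\gamma\ge\tfrac23$, iteration with amplitudes $\delta_q$ and frequencies $\lambda_q$, interpolation to $C^{1,\theta}$ --- matches the paper. But the heart of your stage construction diverges from the paper in two essential ways, and the version you sketch runs into exactly the obstacles you flag at the end, whereas the paper's version sidesteps them entirely.

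First, the diagonalization. You propose an eigendecomposition $\widetilde D_q=\sum a_i^2\,\xi_i\otimes\xi_i$ with a slowly varying frame, and then worry (rightly) about eigenvalue coalescence and the $x$-dependence of $\xi_i$. The paper does \emph{not} diagonalize this way. Instead it exploits the extra freedom in the $\sd w$ term: one solves $H-\sd w=r\,\id$ (via the Cauchy transform, so that $w$ gains a derivative over $H$) and thereby reduces the defect to a multiple of the identity in the \emph{fixed} coordinate directions $e_1,e_2$. No partition of unity, no moving frame, no degeneracy issue. This is the observation from Cao--Sz\'ekelyhidi that you are effectively reinventing in a harder form.

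Second, and more seriously, your claim that ``both corrugations can be performed at the single frequency $\mu\asymp\lambda_{q+1}$'' is not what the paper does, and your ``reabsorb the first corrugation's error into a recomputed second piece $\tilde a_2^2\,\tilde\xi_2\otimes\tilde\xi_2$'' misses the actual mechanism. The paper adds a first perturbation at an intermediate frequency $\mu$ with $\sigma^{-1}\lesssim\mu\ll\lambda_{q+1}$ along $e_1$, and a second at $\lambda_{q+1}$ along $e_2$. The leading errors of the first perturbation that are \emph{not} in the $e_2\otimes e_2$ slot are absorbed \emph{into the decomposition itself}: one replaces $H-\sd w=r\,\id$ by the implicit equation $H-\sd w+E(r)=r\,\id$, where $E(r)$ encodes precisely those error terms (your ``recomputed second piece'' has no analogue of this), and solves it by a finite Picard iteration $a^{(k)}\id=H-\sd w^{(k)}+E(a^{(k-1)})$. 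The point that makes this close is algebraic: since the $e_2\otimes e_2$ component of $E$ is deliberately removed, the coefficient $a^{(k)}$ is expressed through $\partial_2 w^{(k)}$ only, so a fast $\partial_1$-derivative on $a^{(k)}$ can be traded for a slow $\partial_2$-derivative via the Cauchy transform. This anisotropic gain is what lets the iteration contract with factor $(\mu^{\alpha-1}\sigma^{-1})$ and is the reason the scheme behaves ``as if'' there were one perturbation. The remaining $e_2\otimes e_2$ errors, together with $r\,e_2\otimes e_2$, are then cancelled exactly by the second corrugation at the genuinely higher frequency $\lambda_{q+1}$. Without this two-scale structure and the Picard absorption, your single-frequency plan would leave uncontrolled self-interaction terms at $\lambda_{q+1}$ and would not reach $\theta<\tfrac13$.
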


This result improves upon the main theorems in \cite{LewickaPakzad} and \cite{CaoSz}, where the above theorem is shown for $\theta<\frac{1}{7}$ and $\theta<\frac{1}{5}$ respectively. It shows a type of flexibility of the space of solutions to \eqref{e:MA} exhibited more generally by differential relations satisfying the $h$-principle, a term introduced in \cite{Gromov} and proven to be present in various problems across fields such as geometry, topology, PDE and also fluid dynamics (see for example \cite{Gromov}, \cite{Eliashberg},\cite{Laszlo},\cite{BuckmasterVicol}). 
In most of these instances (and in particular for the main theorems of \cite{LewickaPakzad,CaoSz} and the present work), this flexibility is shown by means of a convex integration scheme, a method originally introduced by J. Nash in \cite{Nash54} for the construction of counter-intuitive isometric embeddings of Riemannian manifolds into Euclidean space.

\
\subsection{Connection to isometric embeddings and reduction to convex integration} Recall that an isometric embedding of a Riemannian manifold $(\mathcal M,g)$ into $\R^{m}$ is an injective $C^{1}$-map $u:\mathcal M\to \R^{m}$ such that the induced metric $u^{\sharp}e$ agrees with $g$. In local coordinates this leads to the system 
\[ \partial_i u \cdot \partial_j u = g_{ij}\,,1\leq i,j\leq \mathrm{dim}\mathcal M\]
sometimes simply written $\nabla u^{T}\nabla u = g$. The connection between isometric embeddings of two-dimesional Riemannian manifolds into $\R^{3}$ and the Monge--Amp\`ere equation is intimate: finding a (local) $C^{2}$-regular isometric embedding of a two-dimensional domain into $\R^{3}$ is equivalent to solving the Darboux equation, a fully nonlinear equation of Monge--Amp\`ere type (see for example \cite{HanHong}). 
As observed in \cite{LewickaPakzad}, a connection remains on the level of weaker solutions, i.e., isometric embeddings of regularity $C^{1,\theta}$ and very weak solutions of the Monge-Amp\`ere equation of regularity $C^{1,\theta}$ respectively. 
Indeed, recall that the kernel of the operator curlcurl on a simply connected domain is given by matrices of the form $\sd w :=\frac{1}{2}(\nabla w + \nabla w^{T})$ for $w:\Omega\to\R^{2}$. Thus, solving \eqref{e:MA} is equivalent to solving the system 
\begin{align} 
 \frac{1}{2}\nabla v\otimes \nabla v +\sd w = A \label{e:cI}\\
 -\mathrm{curl\,curl} A = f  \label{e:f}
\end{align}
in the unknowns $v:\Omega\to \R$, $w:\Omega\to \R^{2}$ and $A:\Omega\to \R^{2\times 2}_{sym}$.
The second equation is simply a compatibility condition and is easily solved: one can set $A = u\, \id $ for a function $u$ such that $-\Delta u = f$. 
On the other hand, for $A=0$, equation \eqref{e:cI} can be seen as the condition for the family $\phi_t:\Omega\to \R^{3}$ given by $\phi_t(x) = (x+t^{2}w(x), tv(x))$ to "preserve lengths upto second order" (i.e., the induced metric $\nabla \phi_t^{T}\nabla \phi_t$ agrees with the identity metric upto terms of order $t^{2}$), or in other words, to form a second order infinitesimal isometry\footnote{For a more elaborate explanation of this connection and further motivations to study very weak solutions to the Monge-Amp\`ere equation, see for example the introductions in \cite{LewickaPakzad} and \cite{CaoSz}.}. Motivated by this connection, Lewicka--Pakzad observed in \cite{LewickaPakzad} that an adaptation of the convex integration process of \cite{CDS} for the construction of $C^{1,\theta}$ isometric embeddings can be used to construct solutions of \eqref{e:cI}. Such a process generates a solution as the limit of a sequence of "subsolutions", i.e., in the present case, maps $(v_{q},w_q)$ such that 
\[ \mathcal D_q:= A-\frac{1}{2}\nabla v_q\otimes \nabla v_q-\sd w_q\]
is positive definite. The construction of $(v_{q+1}, w_{q+1})$ given $(v_q,w_q)$, termed "stage", aims at decreasing the defect $\mathcal D_q$ and consists in first \emph{decomposing} the defect into a sum of rank-one matrices (called "primitive metrics" in \cite{Nash54} and subsequent works) and then \emph{perturbing} $(v_q,w_q)$ by adding "corrugations" which cancel these primitive metrics upto an error of lower order. The corrugations oscillate at a high frequency which increases with $q$ but have amplitudes which decrease with $q$. If set-up correctly, the sequence $\|v_{q+1}-v_q\|_1$ tends to zero while $\|v_{q+1}-v_q\|_2$ blows up in a controlled way. This yields the convergence in some H\"older space $C^{1,\theta}$ to a solution of the problem. The number of corrugations used in a stage influences the size of $\theta$ (see e.g. \cite{Laszlo} for detailed explanation): the three corrugations used in \cite{LewickaPakzad}, \cite{CDS} yield a regularity $\theta<\frac{1}{7}$. For the isometric embedding problem, a change of coordinates was introduced in \cite{DIS} to diagonalize the defect. This reduced the required number of corrugations to two, allowing the construction of local isometric embeddings with regularity $C^{1,\theta}$ for $\theta<\frac{1}{5}$ (a result which was extended to global isometric embeddings in \cite{CaoSze2022}). For the Monge--Amp\`ere equation it was observed in \cite{CaoSz} that instead of introducing a change of coordinate one can use the term $\sd w$ to diagonalize the defect  directly by solving 
\begin{equation} \label{e:de-0} \mathcal D-\sd w = r \id \end{equation} 
for $w$ and $r$, where $\mathcal D$ represents the defect and $r$ is the desired coefficient function.  This gives the reduction to two corrugations and therefore regularity $\theta<\frac{1}{5}$. 

\subsection{Strategy of proof} As in \cite{LewickaPakzad}, \cite{CaoSz} we prove Theorem \ref{t:main} by constructing solutions to \eqref{e:cI} by the convex integration scheme outlined above. However, to get to H\"older regularity $\theta<\frac{1}{3}$ we need to decrease the rate with which the frequencies of the oscillations grow along the sequence. We achieve this by a decomposition which simultaneously  diagonalizes the defect and in addition incorporates (some of) the leading-order error terms of the first perturbation (see \eqref{e:de-1}). Even though we still need to add two perturbations to cancel the defect, their frequency can be chosen smaller, effectively reducing the number of perturbations to one. 

The idea of absorbing the error terms coming from the oscillations into the decomposition first appears in \cite{Kaellen} in the context of isometric embeddings into Euclidean space with high codimension, where the author makes use of the freedom of high codimension to absorb such errors  with arbitrary precision. This allows for the construction of isometric embeddings of class $C^{1,\theta}$ for all $\theta<1$, given that the metric $g$ is regular enough and the codimension high enough (this idea is also used for less regular metrics and smaller codimension in \cite{CaoSz2023, DI2020, CaoIn2020}).

Since in the present context we do not have the freedom of  codimension, the best we can hope for is to absorb the error terms of the first oscillation. In practice this involves solving an equation of the form
 \begin{equation}\label{e:de-1}
\mathcal D -\sd w +E(r) = r \id
\end{equation}
for $w$ and $r$, where $E(r)$ represents the error terms of the first perturbation (compare to \eqref{e:de-0}). 
However, due to the structure of these error terms, a direct analogue of the decomposition in \cite{Kaellen} does not yield the required improvement\footnote{The same is true for codimension one isometric embeddings.}. Instead, the observation that the fast oscillation of the first perturbation only occurs in one direction leads us to absorb only specific parts of the error terms in the decomposition. Their algebraic structure is chosen such that it is possible to trade one ``fast'' derivative for a ``slow'' one.
Equation \eqref{e:de-1} is then approached by a Picard scheme.  This yields the crucial decomposition Lemma \ref{l:decomposition}, which is one of the main technical contributions of this paper. The leading-order error terms of the first perturbation which are not absorbed by the decomposition are then matched exactly by the second perturbation. 

\subsection{Organization of the paper} After some preliminaries on H\"older spaces and mollification in Section \ref{s:preliminaries} we show in Section \ref{s:stage} how the decomposition Lemma \ref{l:decomposition} leads to a proof of the crucial inductive building block, the "stage" proposition. We then show in Section \ref{s:proofoftmain} how the stage proposition can be applied to prove the main theorem. Finally we prove the decomposition lemma in Section \ref{s:decom}.

\section{Preliminaries} \label{s:preliminaries}
In this section we introduce some notations, function spaces and basic lemmas which are needed for the proof of Theorem \ref{t:main}

\subsection{H\"older norms and interpolation} Let $\Omega \subset \R^{2}$ be as above. In the following, the maps $f$ can be real valued, vector valued, or tensor valued. In every case, the target is equipped with the Euclidean norm, denoted by $|f(x)|$. The H\"older norms are then defined as follows:
\begin{equation*}
\|f\|_0=\sup_{\Omega}|f|, ~~\|f\|_m=\sum_{j=0}^m\max_{|I|=j}\|\partial^I f\|_0,,
\end{equation*}
where $I$ denotes a multi-index,
and, for $\alpha\in ]0,1]$ and $k\in \N$, 
\begin{equation*}
[f]_{0,\alpha}=\sup_{x\neq y}\frac{|f(x)-f(y)|}{|x-y|^\alpha},~~[f]_{k,\alpha}=\max_{|I|=k}\sup_{x\neq y}\frac{ |\partial^I f(x)-\partial^I f(y)|}{|x-y|^\alpha}\,.
\end{equation*}
Then the H\"older norms are given as
$$\|f\|_{k,\alpha}=\|f\|_k+[f]_{k,\alpha}. $$
We sometimes abbreviate $[f]_{k,\alpha},\|f\|_{k,\alpha}$ as $[f]_{k+\alpha}$ and $\|f\|_{k+\alpha}$ respectively.
We recall the standard interpolation inequality
\begin{equation}\label{e:interpolation}
[f]_p\leq C\|f\|_0^{1-\frac{p}{d}}[f]_d^{\frac{p}{d}}
\end{equation}
for any $d>p\geq0$ and the Leibniz rule
\begin{equation}\label{e:Leibniz}
\|fg\|_p\leq C(\|f\|_p\|g\|_0+\|f\|_0\|g\|_p),
\end{equation}
where the constants only depend on $p,d$ and $p$ respectively. 
We also collect two classical estimates on the H\"older norms of compositions. A proof can be found in \cite{DIS}.

\begin{lemma}\label{l:chain}
Let $k,n\in \N$, $\alpha\in [0,1]$, $U\subset \R^{n}$ and $\Omega \subset \R^{2}$ as above. Assume $\Psi: U \to \mathbb R$ and $u: \Omega  \to U$ are two $C^{k,\alpha}$ functions.
Then there is a constant $C$ (depending only on $k$, $\alpha$, 
$\Omega$ and $U$) such that
\begin{align}
\left[\Psi\circ u\right]_{k,\alpha}&\leq C[u]_{k,\alpha}\left( [\Psi]_1+\|u\|_0^{k-1}[\Psi]_k\right) \nonumber \\
&\quad\quad + C[\Psi]_{k,\alpha}\left (\|u\|_{0}^{k-1}[u]_{k}\right )^{\frac{k+\alpha}{k}}\label{e:chain0}\, ,\\
\left[\Psi\circ u\right]_{k,\alpha} &\leq C\left([u]_{k,\alpha}[\Psi]_1+[u]_1^{k+\alpha}[\Psi]_{k+\alpha}\right)\label{e:chain1}\, .
\end{align} 
\end{lemma}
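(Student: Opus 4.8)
The plan is to prove \eqref{e:chain1} directly via the Fa\`a di Bruno formula and interpolation, and then to deduce \eqref{e:chain0} from it. By the Fa\`a di Bruno formula, $D^{k}(\Psi\circ u)$ is a finite linear combination, with universal coefficients, of monomials
\[ T_{\mathbf k}:=\big((D^{\ell}\Psi)\circ u\big)\,(D^{k_{1}}u)\cdots(D^{k_{\ell}}u)\,, \]
one for each way of writing $k=k_{1}+\dots+k_{\ell}$ with $\ell\geq1$ and all $k_{i}\geq1$. It therefore suffices to bound $[T_{\mathbf k}]_{0,\alpha}$ for each such $\mathbf k$. Applying the elementary product rule for H\"older seminorms,
\[ [g_{0}g_{1}\cdots g_{\ell}]_{0,\alpha}\leq\sum_{i=0}^{\ell}[g_{i}]_{0,\alpha}\prod_{j\neq i}\|g_{j}\|_{0}\,, \]
with $g_{0}=(D^{\ell}\Psi)\circ u$ and $g_{i}=D^{k_{i}}u$, reduces the task to estimating the single factors: $\|D^{k_{i}}u\|_{0}=[u]_{k_{i}}$, $[D^{k_{i}}u]_{0,\alpha}=[u]_{k_{i},\alpha}$, $\|(D^{\ell}\Psi)\circ u\|_{0}\leq\|D^{\ell}\Psi\|_{0}=:[\Psi]_{\ell}$, and $[(D^{\ell}\Psi)\circ u]_{0,\alpha}\leq[\Psi]_{\ell,\alpha}\,[u]_{1}^{\alpha}$ (using that $u$ is Lipschitz on $\Omega$).

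The next step is to redistribute derivatives by means of the interpolation inequality \eqref{e:interpolation}, every interpolation being between ``order $1$'' and ``order $k+\alpha$'': each $[u]_{k_{i}}$, each $[u]_{k_{i},\alpha}$, and the surplus factor $[u]_{1}^{\alpha}$ are interpolated between $[u]_{1}$ and $[u]_{k,\alpha}$, while $[\Psi]_{\ell}$ and $[\Psi]_{\ell,\alpha}$ are interpolated between $[\Psi]_{1}$ and $[\Psi]_{k,\alpha}$. The bookkeeping is governed by the fact that each $T_{\mathbf k}$ carries total differentiation order exactly $k+\alpha$, to be shared between its $u$-factors and its $\Psi$-factor; carrying out the exponent arithmetic shows that every monomial produced in this way has the form
\[ \big([u]_{k,\alpha}\,[\Psi]_{1}\big)^{\lambda}\big([u]_{1}^{\,k+\alpha}\,[\Psi]_{k+\alpha}\big)^{1-\lambda}\,,\qquad \lambda=\lambda(\mathbf k,k,\alpha)\in[0,1] \]
($\lambda$ is the resulting exponent of $[\Psi]_{1}$, and it lies in $[0,1]$ precisely because $1\leq\ell\leq k$). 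Young's inequality $a^{\lambda}b^{1-\lambda}\leq a+b$ then bounds each such monomial by $[u]_{k,\alpha}[\Psi]_{1}+[u]_{1}^{k+\alpha}[\Psi]_{k+\alpha}$, and summing over the finitely many decompositions $\mathbf k$ gives \eqref{e:chain1}; the extreme cases $\ell=1$ (where $\lambda=1$) and $\ell=k$ (where $\lambda=0$) are recorded separately, since then only one of the two terms survives. Finally, \eqref{e:chain0} follows from \eqref{e:chain1}: by \eqref{e:interpolation} one has $[u]_{1}\leq C\|u\|_{0}^{1-1/k}[u]_{k}^{1/k}$, hence $[u]_{1}^{k+\alpha}[\Psi]_{k+\alpha}\leq C\big(\|u\|_{0}^{k-1}[u]_{k}\big)^{(k+\alpha)/k}[\Psi]_{k,\alpha}$, and adding the nonnegative term $C[u]_{k,\alpha}\|u\|_{0}^{k-1}[\Psi]_{k}$ yields the right-hand side of \eqref{e:chain0}.

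I expect the only genuine difficulty to be the exponent bookkeeping in the middle step: one has to check, decomposition by decomposition, that the exponents produced by the interpolations add up correctly and that the resulting $\lambda$ always lies in $[0,1]$, so that Young's inequality applies and no seminorm of $\Psi$ or $u$ of order exceeding $k$ — and no negative power of a seminorm — is ever needed; a careless choice of interpolation endpoints produces monomials that cannot be absorbed into the right-hand sides. A detailed computation of this kind is given in \cite{DIS}.
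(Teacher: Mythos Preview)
The paper does not give its own proof of this lemma; it simply refers to \cite{DIS}, where exactly the argument you outline (Fa\`a di Bruno, the product rule for H\"older seminorms, interpolation of every factor between orders $1$ and $k+\alpha$, and Young's inequality to collapse the resulting monomials) is carried out in detail. Your sketch is correct, including the derivation of \eqref{e:chain0} from \eqref{e:chain1} via $[u]_{1}\leq C\|u\|_{0}^{1-1/k}[u]_{k}^{1/k}$, and coincides with the approach in \cite{DIS} that you already cite.
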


\subsection{Mollification estimates}

In the proof of the inductive proposition in the next section we will regularize maps $f\in C^{k,\alpha}(\bar \Omega,\R^{N})$ by convolution with a standard mollifier, i.e., a radially symmetric $\varphi_\ell\in C^{\infty}_c(B_\ell(0))$ with $\int\varphi_\ell = 1$, where $\ell>0$ denotes the length-scale. The resulting regularized maps will then have a smaller domain of definition. To counteract this, we first extend $f$ to a map $\bar f\in C^{k,\alpha}\left (\R^{2},\R^{N}\right )$ such that
\[ \|\bar f \|_{C^{k,\alpha}(\R^{2})} \leq C\|f\|_{C^{k,\alpha}(\bar \Omega)}\,,\]
where the constant $C>0$ depends only on $k,\alpha, N$ and $\Omega$. Such a procedure is given by Whitney's extension theorem, see \cite{Whitney}. We then mollify the resulting extensions at some length-scale $\ell>0$ to obtain $\tilde f= \bar f\ast \varphi_\ell \in C^{\infty}(\bar \Omega,\R^{m})$. We will not further specify this  extension procedure from now on. Such a regularization of H\"older functions enjoys the following estimates (for a proof, see \cite{CDS},\cite{DIS}).
\begin{lemma}\label{l:mollification}
For any $p, d\geq0,$ and $0<\alpha\leq1,$ we have
\begin{align*}
&[f*\varphi_\ell]_{p+d}\leq C\ell^{-d}[f]_p,\\
&\|f-f*\varphi_\ell\|_p\leq C\ell^{2-p}[f]_2, \text{ if } 0\leq p\leq2 ,\\
&\|(fg)*\varphi_\ell-(f*\varphi_\ell)(g*\varphi_\ell)\|_p\leq C\ell^{2\alpha-p}\|f\|_\alpha\|g\|_\alpha
\end{align*}
with constant $C$ depending only on $d, p, \beta, \varphi.$
\end{lemma}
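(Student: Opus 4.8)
The plan is to establish the three estimates separately, in each case reducing matters to the scaling behaviour of $\varphi_\ell$ together with the cancellations coming from $\int\varphi_\ell=1$ and the radial symmetry of $\varphi$. The elementary facts I would use repeatedly are the scaling identity
\[ \int|\partial^K\varphi_\ell(y)|\,|y|^{s}\,dy \;=\; C(\varphi,K,s)\,\ell^{\,s-|K|}\qquad(s\ge 0)\,,\]
obtained by the change of variables $y\mapsto\ell y$, together with $\int\partial^K\varphi_\ell=0$ for $|K|\ge1$ and the vanishing of the first moment $\int y\,\varphi_\ell(y)\,dy=0$. One may always assume $\ell$ bounded, which lets harmless powers of $\ell$ be absorbed into constants when passing from integer to fractional orders via the interpolation inequality \eqref{e:interpolation}; for this reason it suffices to treat integer orders.

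For the first estimate, given a multi-index $I$ with $|I|=p+d$ (with $p,d$ integers) I would write $\partial^I=\partial^J\partial^K$ with $|J|=p$, $|K|=d$, so that $\partial^I(f\ast\varphi_\ell)=(\partial^Jf)\ast(\partial^K\varphi_\ell)$. If $d\ge1$, the cancellation $\int\partial^K\varphi_\ell=0$ lets us rewrite this as $\int\partial^K\varphi_\ell(y)\big(\partial^Jf(x-y)-\partial^Jf(x)\big)\,dy$, bounded by $2[f]_p\|\partial^K\varphi_\ell\|_{L^1}\le C\ell^{-d}[f]_p$ via the scaling identity with $s=0$. For the second estimate at $p=0$, use $\int\varphi_\ell=1$ to write $(f-f\ast\varphi_\ell)(x)=\int\varphi_\ell(y)\big(f(x)-f(x-y)\big)\,dy$ and Taylor expand $f(x-y)$ to second order at $x$: the zeroth-order term cancels, the linear term $\nabla f(x)\cdot\int y\,\varphi_\ell(y)\,dy$ vanishes by radial symmetry, and the integral remainder is controlled by $\tfrac12\|\nabla^2f\|_0\int|y|^2\varphi_\ell(y)\,dy\le C\ell^2[f]_2$. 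The same computation applied to $\nabla f$ gives $p=1$ with a gain of $\ell$, the crude bound $\|\nabla^2f-(\nabla^2f)\ast\varphi_\ell\|_0\le2[f]_2$ gives $p=2$, and intermediate $p$ follow by interpolation.

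For the third estimate (the Constantin--E--Titi commutator estimate), set $F=f\ast\varphi_\ell$, $G=g\ast\varphi_\ell$ and $R_\ell:=(fg)\ast\varphi_\ell-FG$. The starting point is the algebraic identity
\[ R_\ell(x) \;=\; \int\varphi_\ell(x-z)\big(f(z)-f(x)\big)\big(g(z)-g(x)\big)\,dz \;-\;(f-F)(x)\,(g-G)(x)\,,\]
which follows by expanding the product under the integral and using $\int\varphi_\ell=1$. For $p=0$ the first term is bounded by $[f]_\alpha[g]_\alpha\int|x-z|^{2\alpha}\varphi_\ell(x-z)\,dz\le C\ell^{2\alpha}[f]_\alpha[g]_\alpha$, and the second by $\|f-F\|_0\|g-G\|_0\le C\ell^{2\alpha}[f]_\alpha[g]_\alpha$ using the second estimate at order $\alpha$. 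For integer $p\ge1$ one differentiates this identity $p$ times, each time moving the $x$-derivative onto the kernel via $\partial_x\big(\varphi_\ell(x-z)\big)=(\partial\varphi_\ell)(x-z)$ and onto the factors $f-F$, $g-G$; the terms in which a derivative would otherwise land on $f$ or $g$ alone (as $\partial f$, $\partial g$) cancel against the corresponding terms produced by differentiating $(f-F)(g-G)$, leaving only convolutions of $f$ and $g$ against derivatives $\partial^K\varphi_\ell$ — each of which still carries the cancellation $\int\partial^K\varphi_\ell=0$. Estimating these with the scaling identity (now with $s=\alpha$ and $s=2\alpha$) yields $\|\partial^pR_\ell\|_0\le C\ell^{2\alpha-p}[f]_\alpha[g]_\alpha$, and fractional $p$ follow by interpolation.

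I expect the bookkeeping in this last step to be the main obstacle: one must organise the repeated differentiation of the commutator identity so that no derivative is ever forced to fall on $f$ or $g$ individually — otherwise the right-hand side would involve higher norms of $f,g$ rather than only $\|f\|_\alpha$ and $\|g\|_\alpha$ — and the cancellation between the two pieces of the identity, combined with the moment cancellations $\int\partial^K\varphi_\ell=0$, is precisely what makes this work. The first two estimates and the passage between integer and fractional orders are routine.
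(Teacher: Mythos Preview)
The paper does not actually prove this lemma; it simply cites \cite{CDS} and \cite{DIS} for the proof. Your sketch is the standard argument and is essentially what those references contain, so in that sense you are aligned with the paper's intended proof.

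Your treatment of the first two estimates is correct and complete. For the third (the Constantin--E--Titi commutator estimate), your identity is right and your plan is correct, but the inductive ``differentiate the identity and watch the bad terms cancel'' description is vague enough that a reader could worry it does not close. A cleaner organisation, which avoids the inductive bookkeeping entirely, is to compute $\partial^{I}R_\ell$ directly for $|I|=p\ge 1$: write
\[
\partial^{I}\big((fg)\ast\varphi_\ell\big)=\int\partial^{I}\varphi_\ell(y)\big[f(x-y)-f(x)\big]\big[g(x-y)-g(x)\big]\,dy + f(x)\,\partial^{I}G(x)+g(x)\,\partial^{I}F(x)
\]
(using $\int\partial^{I}\varphi_\ell=0$ to insert the subtractions), and expand $\partial^{I}(FG)$ by Leibniz. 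The cross terms $F\,\partial^{I}G$ and $G\,\partial^{I}F$ from Leibniz combine with $f\,\partial^{I}G$ and $g\,\partial^{I}F$ to give $(f-F)\,\partial^{I}G$ and $(g-G)\,\partial^{I}F$, each of size $\ell^{\alpha}\cdot\ell^{\alpha-p}$; the remaining Leibniz terms $\partial^{J}F\,\partial^{K}G$ with $|J|,|K|\ge 1$ are of size $\ell^{\alpha-|J|}\cdot\ell^{\alpha-|K|}=\ell^{2\alpha-p}$ by your first estimate; and the integral is bounded by $[f]_\alpha[g]_\alpha\int|\partial^{I}\varphi_\ell(y)||y|^{2\alpha}\,dy\le C\ell^{2\alpha-p}$. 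This is the same cancellation you describe, just packaged so that no induction on $p$ is needed and no derivative ever lands on $f$ or $g$ alone.
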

\subsection{Notations} As usual, $C$ will denote constants whose values can change from line to line. If we wish to emphasize the dependents of the constant on various parameters, say $k,\alpha$, we  write $C(k,\alpha)$. Symmetric $2\times 2$ matrices are denoted by $\S$, the $2\times 2$ identity matrix by $\id$. For any matrix $M$, $M_{ij}$ denotes the $ij$-th entry of $M$. For $\xi=(\xi_1,\xi_2),\,\eta=(\eta_1,\eta_2)\in \R^{2}$, $\xi\otimes \eta$ denotes the $2\times 2$ matrix with entries $(\xi\otimes \eta)_{ij} = \xi_i\eta_j$ and we abbreviate $\xi\odot \eta=\frac12(\xi\otimes \eta + \eta\otimes \xi)=\sym{\xi\otimes\eta}$.

\section{Stage Proposition}\label{s:stage}
 
 As explained in the introduction we want to construct solutions to \eqref{e:cI} by iteratively constructing a sequence of subsolutions $(v_q,w_q)$ converging to an actual solution. In this section we state and prove the main building block of the iteration, the "stage proposition". 
 
 For a given $\theta<\frac{1}{3}$ as in Theorem \ref{t:main} we choose $b>1,\,c>\frac{3}{2}$ such that
 \begin{equation}\label{e:bcbetacondition}
 	c\geq 2b+\frac1{2b}-1,\quad \frac{1}{2bc}<\theta\,.
 \end{equation}
Define a sequence of amplitudes 
 \begin{equation}\label{d:deltaq} 
 	\delta_{q} = a^{-b^{q}},\quad q\in \N\end{equation}
 and a sequence of frequencies of the oscillations
 \begin{equation}\label{d:lambdaq}
 \lambda_q=a^{cb^{q+1}}, \quad q\in \N,
 \end{equation}
 where $a\geq a_0$ is a large constant to be chosen. We initially take sufficiently large $a_0$ such that
 \begin{equation}\label{e:ini-a}
 	\delta_1<1<\lambda_q,\quad \delta_{q+1}\leq\frac12\delta_q\quad \lambda_{q+1}\geq2\lambda_q\,.
 	\end{equation}
We then have the following 
\begin{proposition}\label{p:stage} Let $\theta <\frac{1}{3}$ and $b>1,c>\frac{3}{2}$ as in \eqref{e:bcbetacondition}, and let $\gamma\geq \frac{2}{3}$. There exists $\alpha_0=\alpha_0(b,c)\in \,]0,\gamma[$ such that for any $\alpha\in \,]0,\alpha_0[$, $D\in C^{0,\gamma}(\bar \Omega,\S)$ and $B\geq 1$ there exist constants $\tau_0(\alpha,b,c,\Omega)\in \,]0,1[$, $C_0(\alpha,b,c,\Omega, B 
)\geq 1$  and $a_0(\alpha,b,c,\Omega,\|D\|_{0,\gamma},C_0(\alpha,b,c,\Omega, B))$
with the following property.

Let $v_q\in C^{\infty}(\bar \Omega), w_q\in C^{\infty}(\bar \Omega,\R^{2})$ satisfy 
\begin{align}
&\|D-\frac{1}{2}\nabla v_q\otimes\nabla v_q -\mathrm{sym}\nabla w_q -\delta_{q+1}\id \|_{0,\alpha}\leq \tau_0\delta_{q+1}, \label{e:metricerrorq}\\
&\|v_q\|_{1}+\|w_q\|_1 \leq B-\delta_q^{\sfrac{1}{2}} \label{e:firstderq}\,, \\
&\|v_q\|_{2}+\|w_q\|_2 \leq C_0 \delta_q^{\sfrac{1}{2}}\lambda_q \label{e:secondderq}\,,
\end{align}
where $\lambda_q,\delta_q$ are defined as in  \eqref{d:deltaq}, \eqref{d:lambdaq} respectively. 
If
\begin{equation}\label{e:acondition}  a\geq a_0(\alpha,b,c,\Omega,\|D\|_{0,\gamma},C_0),
\end{equation}
then there exist $v_{q+1}\in C^{\infty}(\bar \Omega), w_{q+1}\in C^{\infty}(\bar \Omega,\R^{2})$ such that \eqref{e:metricerrorq}-\eqref{e:secondderq} hold with $q$ replaced by $q+1$ and additionally 
\begin{align}
 \|v_{q}-v_{q+1}\|_0 &\leq \delta_{q+1}^{\sfrac{1}{2}}\lambda_q^{-1}\,, \label{e:0estimate}\\
 \|v_{q}-v_{q+1}\|_1 +\|w_q-w_{q+1}\|_1 &\leq  C_0\delta_{q+1}^{\sfrac{1}{2}}\label{e:1estimate}\,.
\end{align}
\end{proposition}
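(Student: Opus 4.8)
The plan is to construct the new subsolution $(v_{q+1},w_{q+1})$ by adding two high-frequency corrugations to the mollified $(v_q,w_q)$, following the Nash--Kuiper scheme adapted to \eqref{e:cI} as in \cite{LewickaPakzad,CaoSz}, but with the frequency-optimized decomposition of Lemma \ref{l:decomposition}. First I would mollify at a length scale $\ell$, to be chosen roughly of the form $\ell \sim \lambda_q^{-1}\delta_q^{-1/2}$ (the natural scale at which the $C^2$-bound \eqref{e:secondderq} becomes a $C^1$-type quantity), obtaining $(\tilde v_q,\tilde w_q)$. Using Lemma \ref{l:mollification} together with \eqref{e:metricerrorq}--\eqref{e:secondderq} one controls the mollified defect
\[
\tilde{\mathcal D}_q := D_\ell - \tfrac12 \nabla\tilde v_q\otimes\nabla\tilde v_q - \sym\nabla\tilde w_q,
\]
showing it is close to $\delta_{q+1}\id$ up to an error that is $\mathcal O(\tau_0\delta_{q+1})$ plus mollification errors of size $\ell^{2\gamma}$-ish and $\ell^2 (C_0\delta_q^{1/2}\lambda_q)^2$; the conditions \eqref{e:bcbetacondition} and the smallness of $\alpha$ are exactly what makes all these errors absorbable into, say, $\tfrac14\tau_0\delta_{q+1}$ once $a\geq a_0$. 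This reduces the problem to starting from a smooth subsolution whose defect is a small perturbation of $\delta_{q+1}\id$ with good $C^1$ and $C^2$ estimates on the scale $\ell^{-1}$.

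Next I would invoke the decomposition Lemma \ref{l:decomposition} to write the target defect as a single rank-one ``primitive metric'' $r(x)\,\zeta\otimes\zeta$ (with $\zeta$ a fixed direction, say $e_1$), having first diagonalized via the $\sym\nabla w$ freedom and absorbed the leading error terms $E(r)$ of the first corrugation by solving \eqref{e:de-1} through the Picard iteration described in the strategy; the output is a coefficient function $r\sim\delta_{q+1}$ together with a correction $\bar w$ to $w_q$, with $r,\bar w$ controlled in $C^{0,\alpha}$ and $C^{2}$ in terms of $\delta_{q+1}$ and the scales at hand. Then I add the first corrugation: a perturbation of $v$ of the form $v_q \mapsto v_q + \frac{1}{\mu}\, a(x)\,\Gamma(\mu x\cdot e_1)$ for an appropriate profile $\Gamma$ with zero mean and amplitude $a\sim r^{1/2}\sim\delta_{q+1}^{1/2}$, together with the matching adjustment of $w$, at frequency $\mu\sim\lambda_{q+1}$ (the key point being that because $E(r)$ was absorbed, one does not need an intermediate frequency and can take $\mu$ as small as $\lambda_{q+1}$, which is what gives $\theta<\tfrac13$ via $\tfrac{1}{2bc}<\theta$). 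The remaining, unabsorbed leading error of this corrugation — which by the design of the decomposition oscillates in a single direction and is itself of ``primitive metric'' form — is then cancelled exactly by a second corrugation of the same type in the conjugate direction. Care must be taken that this second corrugation can also be carried out at frequency comparable to $\lambda_{q+1}$, not a multiple of it.

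After the two perturbations, I would collect estimates. The $C^0$ bound \eqref{e:0estimate} follows from amplitude$/$frequency $\sim \delta_{q+1}^{1/2}\lambda_{q+1}^{-1}$, which is $\leq\delta_{q+1}^{1/2}\lambda_q^{-1}$; the $C^1$ bound \eqref{e:1estimate} from the corrugation amplitudes $\sim\delta_{q+1}^{1/2}$; the $C^2$ bound \eqref{e:secondderq} at level $q+1$ from amplitude$\times$frequency $\sim \delta_{q+1}^{1/2}\lambda_{q+1}$, using the $C^2$ estimates on $r$ and the mollified maps and interpolation \eqref{e:interpolation}, \eqref{e:Leibniz} and the composition Lemma \ref{l:chain}; the $C^1$ threshold \eqref{e:firstderq} at level $q+1$ by a telescoping argument using \eqref{e:1estimate} and $\sum_k \delta_k^{1/2}$ geometric, with the constant $C_0$ chosen large enough and then $a_0$ large. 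The genuinely hard step is establishing the new metric estimate \eqref{e:metricerrorq} at level $q+1$: one must expand $\tfrac12\nabla v_{q+1}\otimes\nabla v_{q+1}+\sym\nabla w_{q+1}$, see that the mean/leading parts of both corrugations combine with the decomposition output to reproduce $D_\ell \approx D$ up to $\delta_{q+2}\id$, and bound every remaining term — the high-frequency oscillatory terms (gaining a factor $\mu^{-1}$ upon the stationary-phase$/$integration-by-parts estimate built into the profiles), the cross terms between the two corrugations, the commutator between mollification and the nonlinearity, and the Picard-scheme remainder in \eqref{e:de-1} — by $\tfrac14\tau_0\delta_{q+2}$. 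This is where the precise algebraic form of the decomposition (trading one fast derivative for a slow one) and the balance of exponents in \eqref{e:bcbetacondition} are indispensable, and where essentially all the work lies; the rest is bookkeeping of H\"older norms with the tools of Section \ref{s:preliminaries}.
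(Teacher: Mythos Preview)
Your outline captures the overall architecture (mollify, decompose via Lemma~\ref{l:decomposition}, add two corrugations, estimate), but there is a genuine gap in the frequency choice that would make the scheme fail.

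You propose to take the first corrugation at frequency $\mu\sim\lambda_{q+1}$, arguing that ``because $E(r)$ was absorbed, one does not need an intermediate frequency.'' This misreads what the absorption buys. The paper takes the first corrugation at the \emph{strictly smaller} frequency
\[
\mu=\frac{\delta_{q+2}}{\delta_{q+1}}\lambda_{q+1}^{1-2\alpha}\ll\lambda_{q+1},
\]
and the second at $\lambda_{q+1}$. The point is that the second corrugation's coefficient $s$ contains the oscillatory remainder of the first step (the unabsorbed $e_2\otimes e_2$ part), so $\|\nabla s\|_{0,\alpha}\sim\delta_{q+1}\mu$. The error produced by the second corrugation is then of order $\delta_{q+1}\mu/\lambda_{q+1}$, and one \emph{needs} $\mu\sim(\delta_{q+2}/\delta_{q+1})\lambda_{q+1}$ to make this $\lesssim\delta_{q+2}$. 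If you set $\mu\sim\lambda_{q+1}$ as you suggest, the second corrugation's error is $\sim\delta_{q+1}$, not $\delta_{q+2}$, and \eqref{e:metricerrorq} at level $q+1$ cannot close.

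What the decomposition Lemma~\ref{l:decomposition} actually accomplishes is to kill the error of the \emph{first} corrugation (the terms $E_1,E_2,E_3$ at frequency $\mu$), which would otherwise impose the competing constraint $\delta_{q+1}\sigma^{-1}/\mu\lesssim\delta_{q+2}$ and force $\mu$ to be large. With that constraint gone, $\mu$ only has to satisfy the mild lower bound $\mu^{1-\alpha}\gtrsim\sigma^{-1}$ coming from \eqref{e:smallnesscondition}, which is compatible with the choice above (this is Lemma~\ref{l:parameters}, specifically \eqref{e:p1}). So the hierarchy $\sigma^{-1}\lesssim\mu\ll\lambda_{q+1}$ is essential, and the phrase ``effectively one perturbation'' refers to the \emph{count of error-producing steps}, not to a collapse of the frequency ladder. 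Your mollification scale $\ell\sim\lambda_q^{-1}\delta_q^{-1/2}$ is also off by a factor $\delta_{q+1}^{1/2}$ from the paper's $\sigma$; the correct choice $\sigma^{2-2\alpha}=\delta_{q+1}/(\delta_q\lambda_q^2)$ is tuned so that the quadratic commutator error $\sigma^{2}\|v_q\|_2^2$ lands at $\delta_{q+1}$, not at $1$.
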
 

\begin{proof} We divide the proof into several steps.
	
\emph{Step 1: Parameters.} 
For a fixed $\alpha\in\,]0,\frac12[$ to be chosen below in Lemma \ref{l:parameters} we define a mollification parameter $\sigma$ by
\begin{equation}\label{d:sigma} 
	\sigma^{2-2\alpha }:=\frac{\delta_{q+1}}{\delta_q\lambda_q^{2}}
\end{equation}
and the frequency parameter of the first oscillation by
\begin{equation}\label{d:mu}
\mu := \frac{\delta_{q+2}\lambda_{q+1}^{1-2\alpha}}{\delta_{q+1}}\,.
\end{equation}
Obviously, we have
\[\sigma<1<\mu<\lambda_{q+1}.\]
Moreover,
\begin{lemma}\label{l:parameters} 
	Let $b>1, c>\frac32$ satisfy \eqref{e:bcbetacondition} and let the parameters $\sigma$ and $\mu$ be defined as in \eqref{d:sigma} and \eqref{d:mu} respectively. There exists a constant $\alpha_0(b, c)\ll1$ such that for any $0<\alpha<\alpha_0$ there exists a constant $N_0(b, c, \alpha)\in\N$  such that for any $N_0\leq N\in\N,$ it holds 
	\begin{align}
		&\sigma^{-1}\leq\mu^{1-2\alpha}\,, \label{e:p1}\\
		&\sigma^{\frac23-\alpha}\leq\delta_{q+2}\lambda_{q+1}^{-\alpha}\,,\label{e:p4} \\
		&\delta_{q+1}\left (\mu^{\alpha-1}\sigma^{-1}\right )^{N}\leq \delta_{q+2}\lambda_{q+1}^{-\alpha}\,.\label{e:p2}
	\end{align}
\end{lemma}
\begin{proof}
	Inserting the definitions of $\sigma$ and $\mu$, the estimate \eqref{e:p1} is implied by
	\[\frac{\delta_q\lambda_q^2}{\delta_{q+1}}\leq\frac{\delta_{q+2}^{(2-2\alpha)(1-2\alpha)}\lambda_{q+1}^{(1-2\alpha)^2(2-2\alpha)}}{\delta_{q+1}^{(1-2\alpha)(2-2\alpha)}}.\]
    Taking the logarithm in $a$ and dividing by $b^q\log a,$ we see that the latter inequality follows from
    \[-1+2cb+b\leq-b^2(1-2\alpha)(2-2\alpha)+cb^2(1-2\alpha)^2(2-2\alpha)+b(1-2\alpha)(2-2\alpha),\]
	which follows by 
	\begin{equation}\label{e:calpha1}
		b>\frac{1}{(1-2\alpha)^2(1-\alpha)},\quad
		c>\frac{b^2(1-2\alpha)(2-2\alpha)-b((1-2\alpha)(2-2\alpha)-1)-1}{2b(b(1-2\alpha)^2(1-\alpha)-1)}\,.
	\end{equation}
    Note that in the limit $\alpha\downarrow  0 $ the above bounds read 
    \[ b>1, \quad c>\frac{1}{2b}+1\,.\]
    Thus, with \eqref{e:bcbetacondition}, we can take $\alpha_1(b, c)$ small enough to make \eqref{e:p1} hold for $0<\alpha<\alpha_1$.
    
	Inequality \eqref{e:p4} is equivalent to
	\[\frac{\delta_{q+1}^{\frac23-\alpha}}{\delta_q^{\frac23-\alpha}\lambda_q^{\frac43-2\alpha}}\leq\delta_{q+2}^{2-2\alpha}\lambda_{q+1}^{-\alpha(2-2\alpha)}\,.\]
	Taking the logarithm in $a$ and dividing by $b^q\log a$ yields
	 \[-(b-1)(\frac23-\alpha)-2cb(\frac23-\alpha)\leq-2b^2(1-\alpha)-2cb^2\alpha(1-\alpha),\]
	 which is ensured by
	 \begin{equation}\label{e:calpha2}
	 	\alpha+b\alpha(1-\alpha)<\frac23,\quad c>\frac{2b^2(1-\alpha)-(b-1)(\frac23-\alpha)}{2b(\frac23-\alpha-b\alpha(1-\alpha))}\,.
	 \end{equation}
	 Letting $\alpha\downarrow  0 $, we get the above bounds as
	 \[ 0<\frac23, \quad c>\frac32b+\frac1{2b}-\frac12\,.\]
	 Thus, noting \eqref{e:bcbetacondition}, we can take $\alpha_2(b, c)$ small enough to make \eqref{e:p4} hold for $0<\alpha<\alpha_2$.
	 
	It remains to show \eqref{e:p2}. By the definition of $\mu$ and $\sigma$, it is equivalent to get
	\[ \left(\frac{\delta_{q+2}^{\alpha-1}\lambda_{q+1}^{(1-2\alpha)(\alpha-1)}}{\delta_{q+1}^{\alpha-1}}
	\frac{\delta_q^{\frac1{2-2\alpha}}\lambda_q^{\frac1{1-\alpha}}}{\delta_{q+1}^{\frac1{2-2\alpha}}}
	\right)^N\leq\frac{\delta_{q+2}\lambda_{q+1}^{-\alpha}}{\delta_{q+1}} \] 
	We also take the logarithm in $a$, divide $b^q\log a,$ and then get
	\begin{align*}
		&N[b^2(1-\alpha)-cb^2(1-2\alpha)(1-\alpha)-\frac{1}{2-2\alpha}+\frac{cb}{1-\alpha}-b(1-\alpha)+\frac{b}{2-2\alpha}]\\
		&\leq-b^2-\alpha cb^2+b,
	\end{align*}
which will be implied by
\begin{equation}\label{e:calpha3}
		b>\frac{1}{(1-\alpha)^2(1-2\alpha)},\quad
		c>\frac{(b-1)(1+2b(1-\alpha)^2)}{2b(b(1-\alpha)^2(1-2\alpha)-1)},
\end{equation}
and
\begin{equation}\label{e:N}
		N\geq N_0:=1+\frac{2(1-\alpha)(b^2-b+\alpha cb^2)}{2cb[b(1-\alpha)^2(1-2\alpha)-1]-(b-1)(1+2b(1-\alpha)^2)}\,.
	\end{equation}
Note that letting $\alpha\downarrow0,$ the bounds in \eqref{e:calpha3} tend to
\[b>1,\quad c>1+\frac1{2b}.\]
Hence there exists $\alpha_3(b, c)$ small enough to make \eqref{e:calpha3} hold for $0<\alpha<\alpha_3$. Hence for any $0<\alpha<\alpha_3,$ $N_0$ makes sense and then for any $N\geq N_0$, \eqref{e:p2} hold.

Finally, taking $\alpha_0=\min\{\alpha_1,\,\alpha_2,\,\alpha_3\}$ and $N_0$ as in \eqref{e:N} completes the proof.
\end{proof}

\emph{Step 2:  Mollification.} Let $\{\varphi_{\sigma}\}_{\sigma>0}$ be a standard radially symmetric mollification kernel, and set $v= v_q\ast \varphi_\sigma$, $w= w_q\ast \varphi_\sigma$ for $\sigma$ defined in \eqref{d:sigma} with $\alpha<\alpha_0$. From Lemma \ref{l:mollification} we find for $l=0,1,2$ 
\begin{align}\label{e:v_q-v}
\|v_q-v\|_{l}+\|w_q-w\|_{l} &\leq C\sigma^{2-l}\left (\|v_q\|_{2}+\|w_q\|_2\right )\leq CC_0\sigma^{2-l}\sigma^{-1+\alpha}\delta_{q+1}^{\sfrac12}\\
&\leq \delta_{q+1}^{\sfrac{1}{2}}\sigma^{1-l+\sfrac{\alpha}{2}}\,, \end{align}
and 
\begin{equation}\label{e:vw2}
\|v\|_2+\|w\|_2 \leq \|v_q\|_2+\|w_q\|_2\leq  C_0\delta_q^{\sfrac{1}{2}}\lambda_q \leq \delta_{q+1}^{\sfrac{1}{2}} \sigma^{-1+\sfrac{\alpha}{2}}
\end{equation}
if $a\geq a_0(b, c,\alpha, C_0)$ is large enough.  More generally, for any $l\geq 2$ and $\beta\in [0,1[$ it holds 
 \begin{equation}\label{e:vwestimates} \|v\|_{l,\beta}+\|w\|_{l,\beta}\leq C(l,\beta)\sigma^{2-(l+\beta) }\left (\|v_q\|_{2}+\|w_q\|_{2} \right )  \leq C(l,\beta) \delta_{q+1}^{\sfrac{1}{2}}\sigma^{-l+1-\beta+\sfrac{\alpha}{2}}\,.\end{equation}
From \eqref{e:firstderq} we moreover get
\begin{equation}\label{e:nablav} \|\nabla v\|_0 \leq \|\nabla v_q\|_0\leq B.
\end{equation}

\emph{Step 3: Decomposition.}
Now define 
\begin{equation}\label{d:h}
 H =\frac{D\ast\varphi_\sigma -\frac{1}{2}\nabla v\otimes \nabla  v -\mathrm{sym}\nabla w }{\delta_{q+1}} -\frac{\delta_{q+2}}{\delta_{q+1}}\id \,.
\end{equation}
This is the rescaled defect we want to decompose. As explained in the introduction however, we want incorporate some of the error terms resulting from the first perturbation (see \eqref{e:tildev}--\eqref{e:tildew}) in the decomposition. The error terms we want to absorb have the form 
\begin{align}
E_1(x,a,M)&:= \frac{\gamma_1(\mu x_1)}{\mu}\sqrt{a} (M-M_{22}e_2\otimes e_2)  \label{d:E1}\\
E_2(x,a)&:= -\frac{\gamma_3(\mu x_1)}{\mu}\nabla a \odot e_1 \label{d:E2} \\
E_3(x,a) &:= -\frac{\gamma_1^{2}(\mu x_1)}{2\mu^{2}} \left (\nabla\sqrt{a}\otimes \nabla \sqrt {a} - \left (\partial_2 \sqrt{a}\right )^{2}e_2\otimes e_2\right )\,.\label{d:E3} \end{align} 
where $\mu\geq 1$ is defined in  \eqref{d:mu}, $M\in C^{\infty}(\bar \Omega, \S)$,  $a\in C^{\infty}(\Omega,]0,+\infty[)$ will be the coefficient of the decomposition, and 
\begin{equation}\label{d:gammas}\gamma_1(t) = \frac{1}{\pi}\sin(2\pi t), \quad\gamma_2(t) =-\frac{1}{4\pi}\sin(4\pi t),
\quad \gamma_3 = \frac{1}{2}\gamma_1\dot{\gamma_1} + \gamma_2\,.\end{equation}
The functions $\gamma_1,\gamma_2$ are responsible for the oscillatory nature of the perturbations we will add (see \eqref{e:tildev},\eqref{e:tildew}) and are chosen as in \cite{LewickaPakzad}, \cite{CaoSz}. They satisfy the crucial identity $\frac{1}{2}\dot{\gamma_1}^{2}+\dot{\gamma_2} = 1$ which is used below. 
In the following we think of $\mu$ and $M$ as given and $a$ as variable. We thus abbreviate 
\begin{equation} E(a) := E_1(x,a,M)+E_2(x,a)+E_3(x,a)\label{d:E}\end{equation} 
Then we have the following decomposition lemma. 
\begin{lemma}\label{l:decomposition} For every $N\in\N$ and $\alpha \in \,]0,1[$ there exist constants $\tau_*(N,\alpha,\Omega)\in\, ]0,1[$ and $C(N,\alpha,\Omega)>0$ such that the following holds for any $\sigma\in \,]0,1[$. If $\mu \geq  1$, and $H,M\in C^{\infty}(\bar \Omega,\S)$ are such that 
\begin{align}  
	&\|H-\id\|_\alpha + \mu^{\alpha -1}\sigma^{-1} \leq \tau_*\,,\label{e:smallnesscondition}\\
&\|H\|_{l,\alpha}\leq C(l,\alpha)\sigma^{-l}\text{ for any }l\in \N, \label{e:hestimates}\\
&\|M\|_{l,\beta}\leq C(l,\beta)\sigma^{-l-1-\beta}\text{ for any } l\in \N, \beta\in [0,1[\label{e:Mestimates}
\end{align} 
for some constants $C(l,\alpha), C(l,\beta)\geq 1$ then there exist maps $\{a^{(k)}\}_{k=0}^{N}\subset C^{\infty}(\bar \Omega,]0,+\infty[)$, $\{w^{(k)}\}_{k=0}^{N} \subset C^{\infty}(\bar \Omega, \R^{2})$ such that for $k=1,\ldots,N$ 
\begin{equation}\label{e:Kaellendecomp} a^{(k)} \id = H -\mathrm{sym} \nabla w^{(k)}+E(a^{(k-1)})\,.
\end{equation}
 with estimates
\begin{align}
\| a^{(k)}-1\|_{0,\alpha}+ \|w^{(k)}\|_{1,\alpha}&\leq \frac{1}{2} \label{e:decompestimate0}\\
\|a^{(k)} - 1 \|_{l,\alpha} + \|w^{(k)}\|_{l+1,\alpha} &\leq C(l,\alpha,\Omega)\sigma^{-1}\mu ^{l-1} \quad \text{ for } 1\leq l \leq N+3-k \label{e:decompestimate1}\\
\|E(a^{(k)})-E(a^{(k-1)}) \|_{0,\alpha}&\leq C(N,\alpha,\Omega)\left (\mu^{\alpha-1}\sigma^{-1}\right )^{k+1}\,.\label{e:differrors}
\end{align}
\end{lemma} 
\begin{remark}
Due to the appearance of $\nabla a^{(k)}$ in $E(a^{(k)})$ one cannot use a fixed point theorem to get the limit of $a^{(k)}$. Therefore, we  only use finitely many times iteration \eqref{e:Kaellendecomp}.
\end{remark}
\begin{proof}
	The proof is in Section \ref{s:decom}. 
\end{proof}
To apply the decomposition lemma, we fix $\alpha<\alpha_0$  and $N\geq N_0$  such that Lemma \ref{l:parameters} holds.
With such $\alpha$, after taking $a$ large, we get the following.
\begin{lemma}\label{l:h} 
	If $\tau_0<\frac{\tau_*}{6}$ and $a\geq a_0(\alpha,b,c, \Omega)$ are chosen large enough, then it holds
\begin{align}
&\|H-\id\|_{0,\alpha} \leq \frac{\tau_*}{2}\label{e:h0estimate} \\
&\|H-\id\|_{l,\alpha}\leq C(l,\alpha)\sigma^{-l}, \text{ for } 1\leq l\in \N.\label{e:h1estimate} 
\end{align} 
\end{lemma}
\begin{proof}
Thanks to \eqref{e:metricerrorq} and \eqref{e:secondderq}, we can estimate with Lemma \ref{l:mollification}
\begin{align*}
\|H-\id\|_{0,\alpha}&\leq \frac{1}{\delta_{q+1}}\|(D-\frac{1}{2}\nabla v_q\otimes \nabla v_q -\sd w_q-\delta_{q+1}\id)\ast \varphi_\sigma\|_{0,\alpha} + C\frac{\delta_{q+2}}{\delta_{q+1}} \\
&\quad \quad +\frac{1}{2\delta_{q+1}}\|\nabla v\otimes \nabla v - \left (\nabla v_q\otimes \nabla v_q\right )\ast \varphi_\sigma\|_{0,\alpha}\\
&\leq 2\tau_0+ \frac{C}{\delta_{q+1}} \sigma^{2-\alpha}\|v_q\|_{2}^{2} \leq 3\tau_0 \,,
\end{align*} if  $a\geq a_0(\tau_0,C_0)$ is chosen large enough. Moreover, using \eqref{e:Leibniz} and Lemma \ref{l:mollification}  we infer
\begin{align*} \|H-\id\|_{l,\alpha}&\leq \frac{C(l,\alpha)}{\delta_{q+1}}\left (\tau_0 \delta_{q+1}\sigma^{-l}+ \|v_q\|_{2}^{2}\sigma^{2-(l+\alpha)}\right )\leq C(l,\alpha)\sigma^{-l}\,. \qedhere
\end{align*}
\end{proof} 
Moreover, from Lemma \ref{l:parameters}, it is easy to get
\begin{equation}\label{e:tau1}
	\mu^{\alpha-1}\sigma^{-1}\leq\mu^{-\alpha}<\frac{\tau_*}{2}
	\end{equation}
provided that $a$ is sufficiently large depending on $\tau_*$, i.e. on $b,c,\alpha,\Omega$. Set
\begin{equation}\label{d:M}
M:= \delta_{q+1}^{-\sfrac{1}{2}} D^{2}v\,,\end{equation}
for which we have (thanks to \eqref{e:vwestimates})
\begin{equation}\label{e:M}
 \| M \|_{l,\beta } \leq  C(l,\beta) \sigma^{-l-1-\beta+{\alpha}}\leq C(l,\beta) \sigma^{-l-1-\beta}\end{equation}
for $l\in \N$, $\beta\in [0,1[$. 
In view of \eqref{e:h0estimate}--\eqref{e:h1estimate}, \eqref{e:tau1} and \eqref{e:M}, 
we can therefore apply Lemma \ref{l:decomposition} to get maps $\{a^{(k)}\}_{k=0}^{N}, \{w^{(k)}\}_{k=0}^{N}$ satisfying the conclusions of the lemma. In particular,  
\begin{equation}\label{e:Ndecom}
	a^{(N)} \id = H -\mathrm{sym} \nabla w^{(N)}+E(a^{(N-1)})=a^{(N)}(e_1\otimes e_1+e_2\otimes e_2)\,
\end{equation}
with $M$ of $E_1$ defined in \eqref{d:M}.

\emph{Step 4: First perturbation.} Set
\begin{equation}\label{d:aw}
 r= \delta_{q+1}a^{(N)}\,,\quad w_c = \delta_{q+1}w^{(N)}
\end{equation}
and define the first perturbation 
\begin{align} 
&\tilde v= v+\frac{\sqrt{r}}{\mu}\gamma_1(\mu x_1) \label{e:tildev}\\
&\tilde w= w+w_c -\frac{\sqrt{r}}{\mu}\gamma_1(\mu x_1)\nabla v +\frac{r}{\mu}\gamma_2(\mu x_1) e_1 \,.\label{e:tildew} 
\end{align}
A quick computation gives 
\begin{equation}\label{e:nablavtilde} \nabla\tilde v= \nabla v +\sqrt{r}\dot{\gamma_1}e_1 + \frac{\gamma_1}{\mu}\nabla\sqrt{r}\end{equation}
and 
\begin{align}
\nabla \tilde w & = \nabla w + \nabla w_c -\sqrt{r}\dot{\gamma_1} \nabla v \otimes e_1 +r \dot{\gamma_2} e_1\otimes e_1 \nonumber\\
 &\quad \quad - \frac{\gamma_1}{\mu}\nabla v \otimes \nabla\sqrt{r} -\frac{\sqrt{r}}{\mu}\gamma_1 D^{2}v +\frac{\gamma_2}{\mu} e_1\otimes \nabla r
  \,.
\end{align}
Consequently 
\begin{align*} 
\frac{1}{2}\nabla \tilde v\otimes \nabla\tilde v &= \frac{1}{2}\nabla v \otimes \nabla v + \frac{1}{2}r \dot{\gamma_1}^{2} e_1\otimes e_1 + \sqrt{r}\dot{\gamma_1} \nabla v \odot e_1 \\ 
&\quad\quad +\frac{\gamma_1}{\mu}\nabla\sqrt{r}\odot \nabla v +\frac{\gamma_1\dot{\gamma_1}}{\mu} \sqrt{r}e_1\odot\nabla\sqrt{r}\\
&\quad\quad +\frac{\gamma_1^{2}}{2\mu^{2}}\nabla\sqrt{r}\otimes \nabla \sqrt{r}\,,
\end{align*}
and therefore  
\begin{align*}
\frac{1}{2}\nabla \tilde v\otimes \nabla\tilde v +\mathrm{sym}\nabla \tilde w& = \frac{1}{2}\nabla v\otimes \nabla v +\mathrm{sym}\nabla  w + \mathrm{sym}\nabla w_c +re_1\otimes e_1 \\ 
&\quad +\frac{\gamma_3}{\mu}\nabla r\odot e_1  - \frac{\sqrt{r}}{\mu}\gamma_1 D^{2}v +\frac{\gamma_1^{2}}{2\mu^{2}	}\nabla\sqrt{r}\otimes \nabla \sqrt{r}
\end{align*} 
recalling the definition of $\gamma_3 $ in \eqref{d:gammas} and $\frac{1}{2}\dot{\gamma_1}^{2}+\dot{\gamma_2} \equiv 1$. But from \eqref{e:Kaellendecomp} we have
\[ \mathrm{sym}\nabla w_c =\delta_{q+1}H -r \id + \delta_{q+1}E(a^{(N-1)})\] 
so that 
\begin{align}
\frac{1}{2}\nabla \tilde v\otimes \nabla\tilde v +\mathrm{sym}\nabla \tilde w
& = \frac{1}{2}\nabla v\otimes \nabla v +\mathrm{sym}\nabla  w +\delta_{q+1}H \nonumber\\
&\quad \quad+\delta_{q+1}\left (E(a^{(N-1)})-E(a^{(N)}))\right )\nonumber \\ 
&\quad \quad  + \left (\frac{\sqrt{r}}{\mu}\gamma_1\partial^{2}_{22} v -\frac{\gamma_1^{2}}{2\mu^{2}}\left (\partial_2 \sqrt{r}\right )^{2}-r\right ) e_2\otimes e_2 \nonumber\\
& = D\ast\varphi_\sigma -\delta_{q+2}\id + \delta_{q+1}\left (E(a^{(N-1)})-E(a^{(N)})\right ) \nonumber\\
& \quad \quad + \left (\frac{\sqrt{r}}{\mu}\gamma_1\partial^{2}_{22} v -\frac{\gamma_1^{2}}{2\mu^{2}}\left (\partial_2 \sqrt{r}\right )^{2}-r\right ) e_2\otimes e_2
\end{align} 

\emph{Step 5: Second perturbation.} For the second perturbation we set 
\begin{equation}
s = r -\frac{\sqrt{r}}{\mu}\gamma_1\partial^{2}_{22} v +\frac{\gamma_1^{2}}{2\mu^{2}}\left (\partial_2 \sqrt{r}\right )^{2}
\end{equation} 
and let 
\begin{align}
&v_{q+1}=\tilde v +\frac{\sqrt{s}}{\lambda_{q+1}} \gamma_1\left (\lambda_{q+1}x_2\right ) \\ 
&w_{q+1} = \tilde w -\frac{\sqrt{s}}{\lambda_{q+1}}\gamma_1\left (\lambda_{q+1}x_2\right )\nabla \tilde v +\frac{s}{\lambda_{q+1}}\gamma_2\left (\lambda_{q+1}x_2\right ) e_2\,.
\end{align}
We will show below in \eqref{e:b} that these choices are well-defined. As above we find 
\begin{align}
 \frac{1}{2}\nabla v_{q+1}\otimes \nabla v_{q+1} +\mathrm{sym}\nabla w_{q+1}\nonumber
=& \frac{1}{2}\nabla \tilde v\otimes \nabla\tilde v +\mathrm{sym}\nabla \tilde w +s e_2\otimes e_2  +\frac{\gamma_3}{\lambda_{q+1}}\nabla s\odot e_2\nonumber\\ 
&\quad - \frac{\sqrt{s}}{\lambda_{q+1}}\gamma_1 D^{2}\tilde v +\frac{\gamma_1^{2}}{2\lambda_{q+1}^{2}	}\nabla\sqrt{s}\otimes \nabla \sqrt{s}\nonumber\\
 =&D\ast\varphi_{\sigma} - \delta_{q+2 }\id +\delta_{q+1}\left (E(a^{(N-1)})-E(a^{(N)})\right )\nonumber\\ 
&\quad +\frac{\gamma_3}{\lambda_{q+1}}\nabla s\odot e_2  - \frac{\sqrt{s}}{\lambda_{q+1}}\gamma_1 D^{2}\tilde v \nonumber \\
&\quad +\frac{\gamma_1^{2}}{2\lambda_{q+1}^{2}	}\nabla\sqrt{s}\otimes \nabla \sqrt{s}\,.\label{e:errorq+1}
\end{align}
In particular,
\begin{align}
	 &D-\frac{1}{2}\nabla v_{q+1}\otimes \nabla v_{q+1} -\mathrm{sym}\nabla w_{q+1} -\delta_{q+2}\id\\
	  =& D-D\ast\varphi_\sigma -
\delta_{q+1}\left (E(a^{(N-1)})-E(a^{(N)}\right )\nonumber\\
&\quad -\frac{\gamma_3}{\lambda_{q+1}}\nabla s\odot e_2  - \frac{\sqrt{s}}{\lambda_{q+1}}\gamma_1 D^{2}\tilde v \nonumber \\ 
&\quad +\frac{\gamma_1^{2}}{2\lambda_{q+1}^{2}	}\nabla\sqrt{s}\otimes \nabla \sqrt{s}\,,\label{e:errors}
\end{align}

\bigskip

It now remains to show that the $\alpha$-H\"older norm of the right hand side is less than $\sigma_0\delta_{q+2}$, that \eqref{e:secondderq} holds for $q+1$, and that \eqref{e:0estimate}--\eqref{e:1estimate} hold provided $a$ and $C_0$ are chosen large enough and $\alpha$ small enough. In the following estimates, the constants $C$ may depend on $\alpha,b,c,N,\Omega$ and $B$, 
\emph{but not on} $C_0$ or $a$. 
%

\emph{Step 6: Estimates on $r,\tilde v, \tilde w$.} 
Using \eqref{e:decompestimate0} and \eqref{e:decompestimate1} we find 
\[ \|r-\delta_{q+1}\|_{0,\alpha}\leq \frac{1}{2}\delta_{q+1}\,,\quad \|r-\delta_{q+1}\|_{l,\alpha}\leq C\delta_{q+1}\mu^{l-1}\sigma^{-1}\,\text{ for } l=1,2,3\,\]
and with the help of Lemma \ref{l:chain}
\[ \|\sqrt{r}-\delta_{q+1}^{\sfrac{1}{2}}\|_{0,\alpha}\leq \frac{1}{2}\delta_{q+1}^{\sfrac{1}{2}}\,,\quad \|\sqrt{r}-\delta_{q+1}^{\sfrac{1}{2}} \|_{l,\alpha}\leq C\delta_{q+1}^{\sfrac{1}{2}}\mu^{l-1}\sigma^{-1}\, \text{ for } l=1,2,3\,.\]
From there we get \[\|\tilde v -v \|_{0}\leq \frac{3}{2}\delta_{q+1}^{\sfrac{1}{2}}\mu^{-1}\,,\] and 
\begin{equation}\label{e:tildevestimates}\|\tilde v -v \|_{l} \leq C \delta_{q+1}^{\sfrac{1}{2}}\left ( \mu^{l-2}\sigma^{-1} + \mu^{l-1} \right ) \leq C\delta_{q+1}^{\sfrac{1}{2}}\mu^{l-1}  \, \text{ for } l=1,2, \end{equation}
since $\mu^{\alpha-1}\sigma^{-1}<1$ by \eqref{e:tau1}.
For $\tilde w$ we estimate
for $l=1,2$
\begin{align*} \|\frac{\sqrt{r}}{\mu}\gamma_1(\mu x_1)\nabla v\|_l&\leq C \left (\mu^{-1}\|\sqrt{r}\|_{l,\alpha} + \|\sqrt{r}\|_{0,\alpha}\left (\mu^{l-1} + \mu^{-1}\|v\|_{l+1}\right )\right ) \\
&\leq C\delta_{q+1}^{\sfrac{1}{2}}\left (\mu^{l-2}\sigma^{-1} +\mu^{l-1} + \mu^{-1}\sigma^{-l+{\alpha}}\right )\leq C\delta_{q+1}^{\sfrac{1}{2}} \mu^{l-1} \,,
\end{align*}
again due to $\mu^{\alpha-1}\sigma^{-1}<1$. 
With \eqref{e:nablavtilde},\eqref{e:vwestimates}, we have
\[\|\frac{r}{\mu}\gamma_2(\mu x_1) e_1\|_l \leq C\mu^{-1}\delta_{q+1}\left (\mu^{l-1}\sigma^{-1} + \mu^{l}\right )\leq C\delta_{q+1}\mu^{l-1} \]
for $l=1,2$, then in combination with \eqref{e:decompestimate1},
\begin{equation}\label{e:tildewestimates}
\|\tilde w - w\|_{l} \leq C\delta_{q+1}\sigma^{-1}\mu^{l-2}+ C\delta_{q+1}^{\sfrac{1}{2}}\mu^{l-1} \leq C\delta_{q+1}^{\sfrac{1}{2}}\mu^{l-1}\,\end{equation}
for $l=1,2$.

\emph{Step 7: Estimates on $s$, $v_{q+1}$, $w_{q+1}$.}
By \eqref{e:vwestimates} and $\sigma^{-\alpha}\leq\mu^{\alpha}$  
we can estimate
\begin{align*}\|\frac{s}{\delta_{q+1}} -1 \|_{0,\alpha} &\leq \|a^{(N)} -1\|_{0,\alpha} + \delta_{q+1}^{-\sfrac{1}{2}}\left (\mu^{\alpha-1}\|v\|_{2}+\mu^{-1}\|v\|_{2,\alpha}\right ) +\mu^{\alpha-2}\|\sqrt{a^{(N)}}\|_{1,\alpha}^{2} \\
&\leq \frac{1}{2}+C\left (\mu^{\alpha-1}\sigma^{{\alpha}-1} +\mu^{-1}\sigma^{-1-{\alpha}}\right ) +C\mu^{\alpha-2}\sigma^{-2}\\
&\leq \frac{1}{2} + C\mu^{-\alpha}\leq \frac{3}{4} \end{align*}
 if  $a$ is large enough. It follows that 
 \begin{equation}\label{e:b} \|s-\delta_{q+1}\|_{0,\alpha}\leq \frac{3}{4}\delta_{q+1} \text{ and } \|\sqrt{s}	-\delta_{q+1}^{\sfrac{1}{2}}\|_{0,\alpha}\leq \frac{3}{4}\delta_{q+1}^{\sfrac{1}{2}}\,,\end{equation}
and hence $v_{q+1}$ and $w_{q+1}$ are well-defined. For the derivatives we find for $l=1,2$
\begin{align*}
\|\frac{s}{\delta_{q+1}}-1\|_{l,\alpha}& \leq \|a^{(N)}-1\|_{l,\alpha}+ C\delta_{q+1}^{-\sfrac{1}{2}}\mu^{-1}\left ( \left (\|\sqrt{a^{(N)}}\|_{l,\alpha} +\mu^{l+\alpha}\right )\|v\|_2 + \|v\|_{l+2,\alpha}\right ) \\
& \quad +C\left ( \mu^{l+\alpha-2}\|\sqrt{a^{(N)}}\|_{1,\alpha}^{2} +\mu^{-2}\|\sqrt{a^{(N)}}\|_{l+1,\alpha}\|\sqrt{a^{(N)}}\|_{1,\alpha}\right )\\
&\leq C\mu^{l-1}\sigma^{-1} +C\delta_{q+1}^{-\sfrac{1}{2}}\mu^{-1}\left (\mu^{l-1}\sigma^{-1}+\mu^{l+\alpha}\right )\delta_{q+1}^{\sfrac{1}{2}}\sigma^{{\alpha}-1} \\
&	\quad +C\delta_{q+1}^{-\sfrac{1}{2}}\mu^{-1}\sigma^{-(l+\alpha)}\cdot\delta_{q+1}^{\sfrac{1}{2}}\sigma^{{\alpha}-1}+
C\left(\mu^{l+\alpha-2}\sigma^{-2}+\mu^{l-2}\sigma^{-2}\right )\\
&\leq C\mu^{l-1+\alpha}\sigma^{-1} \leq C\mu^{l-\alpha}\leq \mu^{l}\,,
\end{align*}
if $a$ is large enough by \eqref{e:p1}. Thus
\begin{equation}\label{e:bestimates}
\|s-\delta_{q+1}\|_{l,\alpha}\leq \delta_{q+1}\mu^{l}\,,\quad  \|\sqrt{s}-\delta_{q+1}^{\sfrac{1}{2}}\|_{l,\alpha}\leq \delta_{q+1}^{\sfrac{1}{2}}\mu^{l}\,
\end{equation}
for $l=1,2$. This yields the estimates 
\begin{equation}\label{e:vq+10}
\|v_{q+1}-\tilde v\|_0\leq \frac{3}{2}\delta_{q+1}^{\sfrac{1}{2}}\lambda_{q+1}^{-1}\,,
\end{equation} and 
\begin{equation}\label{e:vq+1estimates}
\|v_{q+1}-\tilde v\|_{l} \leq C\delta _{q+1}^{\sfrac{1}{2}}\lambda_{q+1}^{-1}\left (\lambda_{q+1}^{l}+\mu^{l}\right )\leq C\delta_{q+1}^{\sfrac{1}{2}}\lambda_{q+1}^{l-1}\,
\end{equation}
for $l=1,2$, where we have used $
\lambda_{q+1} > \mu.$
Concerning the estimate for $w_{q+1}$ we have for $l=1,2$
\begin{align}
\|w_{q+1}-\tilde w\|_{l} &\leq C\delta_{q+1}^{\sfrac{1}{2}}\lambda_{q+1}^{-1}\left (\lambda_{q+1}^{l}+ \mu^{l}+\delta_{q+1}^{\sfrac{1}{2}}\mu^{l}+\delta_{q+1}^{\sfrac{1}{2}}\sigma^{-l+\sfrac{\alpha}{2}}\right ) \nonumber\\
&\quad + C\delta_{q+1}\lambda_{q+1}^{-1}\left (\lambda_{q+1}^{l}+\mu^{l}\right ) \nonumber \\
&\leq C\delta_{q+1}^{\sfrac{1}{2}}\lambda_{q+1}^{l-1}\label{e:wq+1estimates}\,. \end{align}
Combining with \eqref{e:vw2} and \eqref{e:tildevestimates},  \eqref{e:tildewestimates} this yields
\[ \|v_{q+1}\|_2 +\|w_{q+1}\|_{2} \leq C\delta_{q+1}^{\sfrac{1}{2}}\left (\sigma^{-1+\sfrac{\alpha}{2}} +\mu+\lambda_{q+1}\right ) \leq C\delta_{q+1}^{\sfrac{1}{2}}\lambda_{q+1}\,.\]
Moreover,  
\begin{align*}
	\|v_q-v_{q+1}\|_0 &\leq \|v_q-v\|_0+\|v-\tilde v\|_0+\|\tilde v-v_{q+1}\|_0\\
	&\leq\delta_{q+1}^{\sfrac{1}{2}}\left (\sigma + \frac{3}{2}\mu^{-1}+\frac{3}{2}\lambda_{q+1}^{-1}\right )\leq \delta_{q+1}^{\sfrac{1}{2}}\lambda_q^{-1}
\end{align*}
for $a$ large enough in view of \[ \sigma <\sigma^{1-\alpha}\leq\left (\frac{\delta_{q+1}}{\delta_{q}}\right )^{\sfrac{1}{2}} \lambda_q^{-1}\,.\]
Lastly, we find
\[ \|v_{q+1}-v_q\|_{1} +\|w_{q+1}-w_{q}\|_1 \leq \delta_{q+1}^{\sfrac{1}{2}} \sigma^{\sfrac{\alpha}{2}} + C\delta_{q+1}^{\sfrac{1}{2}}\leq C\delta_{q+1}^{\sfrac{1}{2}}\,.\]
This gives \eqref{e:secondderq}, \eqref{e:0estimate} and \eqref{e:1estimate} provided $C_0$ is large enough (depending on $\alpha,b,c,\Omega, B$), and also \eqref{e:firstderq}
\[\|v_{q+1}\|_1+\|w_{q+1}\|_1\leq \|v_q\|_1+\|w_q\|_1+\|v_{q+1}-v_q\|_{1} +\|w_{q+1}-w_{q}\|_1\leq B-\delta_q^{\sfrac{1}{2}}+C\delta_{q+1}^{\sfrac{1}{2}}\leq B-\delta_{q+1}^{\sfrac{1}{2}}\]
by taking $a$ large enough.

\emph{Step 8: Estimates on $\mathcal E_1,\mathcal E_2$ and $\mathcal E_3$.} 
We are left to show \eqref{e:metricerrorq} for $q+1$. By \eqref{e:errors} we have
\[ D-\frac{1}{2}\nabla v_{q+1}\otimes \nabla v_{q+1} -\mathrm{sym}\nabla w_{q+1} -\delta_{q+2}\id = \mathcal E_1+\mathcal E_2+\mathcal E_3 \]
for
\begin{align*} \mathcal E_1 &= D-D\ast\varphi_\sigma \\
\mathcal E_2 &=-\delta_{q+1}\left (E(a^{(N-1)})-E(a^{(N)})\right )\\
\mathcal E_3 & =-\frac{\gamma_3}{\lambda_{q+1}}\nabla s\odot e_2  - \frac{\sqrt{s}}{\lambda_{q+1}}\gamma_1 D^{2}\tilde v +\frac{\gamma_1^{2}}{2\lambda_{q+1}^{2}	}\nabla\sqrt{s}\otimes \nabla \sqrt{s}\,,
\end{align*}
and we estimate each term separately. First, by Lemma \ref{l:mollification}, $\sigma<1$ and $\gamma>\frac23,$
\begin{equation}\label{e:E1}
\|\mathcal E_1\|_{0,\alpha}\leq C\sigma^{\gamma-\alpha}\|D\|_{0,\gamma}\leq C\sigma^{\frac23-\alpha}\|D\|_{0,\gamma} \,, \end{equation}
which combining with \eqref{e:p4} will leads to 
\[\|\mathcal E_1\|_{0,\alpha}\leq C\delta_{q+2}\lambda_{q+1}^{-\alpha}\|D\|_{0,\gamma}\leq\frac{\tau_0}{3}\delta_{q+2},\]
if $a$ is large enough additionally depending on $\|D\|_{0,\gamma}$. Next, we find from \eqref{e:differrors}
\begin{equation}\label{e:E2}
\|\mathcal E_2\|_{0,\alpha}\leq C\delta_{q+1}\left (\mu^{\alpha-1}\sigma^{-1}\right )^{N+1}\leq\frac{\tau_0}{3}\delta_{q+2}\,,
\end{equation}
by \eqref{e:p2} and taking $a$ large enough. Lastly, using \eqref{e:bestimates} and \eqref{e:tildevestimates}, we can estimate
\[ \|\frac{\gamma_3}{\lambda_{q+1}}\nabla s\odot e_2 \|_{0,\alpha }\leq C\delta_{q+1}\lambda_{q+1}^{\alpha-1}\mu^{\alpha}\sigma^{-1}\,,\]
\[ \|\frac{\sqrt{s}}{\lambda_{q+1}}\gamma_1 D^{2}\tilde v\|_{0,\alpha} \leq C\delta_{q+1}\left (\lambda_{q+1}^{\alpha-1} \mu +\lambda_{q+1}^{-1}\mu^{1+\alpha}\right )\leq C\delta_{q+1}\lambda^{\alpha-1}\mu \,,\]
and \[\|\frac{\gamma_1^{2}}{2\lambda_{q+1}^{2}	}\nabla\sqrt{s}\otimes \nabla \sqrt{s}\|_{0,\alpha}\leq C\delta_{q+1}\lambda_{q+1}^{\alpha-2}\mu^{2\alpha}\sigma^{-2}\,.\] 
Then using $\mu^{\alpha-1}\sigma^{-1}<1$ and $\lambda_{q+1}\geq \mu $, we get
\[\|\mathcal E_3\|_{0,\alpha}\leq C \delta_{q+1}\lambda^{\alpha-1}_{q+1}\mu\leq\frac{\tau_0}{3}\delta_{q+2}\,,\]
by \eqref{d:mu} and taking $a$ sufficiently large.
\phantom\qedhere
\end{proof}

\section{Proof of Theorem \ref{t:main}}\label{s:proofoftmain}

Let $\underline v\in C^{0}(\bar \Omega)$, $f\in L^{p}(\Omega)$, $\epsilon >0$ and $\theta <\frac{1}{3}$ be given. By extending and mollifying we can assume $\underline v\in C^{\infty}(\bar \Omega)$. Then Theorem \ref{t:main} will be proven by seeking $C^{1, \theta}$ solutions to \eqref{e:MA} $C^0$ neighbourhood of $\underline v,$ which is realised by solving the system \eqref{e:cI} and \eqref{e:f} in the 

We first solve \eqref{e:f}. Let $u\in W^{2,p}(\Omega)$ be the potential theoretic solution of $-\Delta u = f$ on $\Omega$. By Morrey's embedding it holds $u\in C^{0,\gamma}(\bar \Omega)$ for $\gamma= 2-\frac{2}{p}\geq \frac{2}{3}$ and
\[\|u\|_{0,\gamma}\leq C\|f\|_{L^p}.\]
We then fix $b>1, c>\frac{3}{2}$ such that $\frac{1}{2bc}< \theta$,  some $\alpha<\alpha_0(b,c,\gamma)<\gamma$ and define
\[ A=(u+\kappa)\id,\text{ with }
\kappa=\frac{\|u\|_{0,\gamma}+\|\underline v\|^2_{1,\gamma}+100}{\tau_0}>1.\] 
where $\tau_0=\tau_0(\alpha,b,c,\Omega)<1$ is from Proposition \ref{p:stage}, then
\[\|A\|_{0,\gamma}\leq\|u\|_{0,\gamma}+\kappa,\quad 
-\curl~\curl A=f.\]

Next we shall apply Proposition \ref{p:stage} to solve \eqref{e:cI}. To this end fix
\begin{align}
B=& \|\underline v\|_1+2 \nonumber\\
	a\geq& a_0(\alpha,b,c,\Omega,\|A\|_{0,\gamma},C_0(\alpha, b, c, \Omega, B))\label{e:achoice}
\end{align}
Note that $a_0(\alpha,b,c,\Omega,\|D\|_{0,\gamma}, C_0)$ in Proposition \ref{p:stage} is
monotonically increasing in $\|D\|_{0,\gamma}$. Hence for any choice of $a$ according to \eqref{e:achoice} it follows also
\[ a\geq a_0(\alpha,b,c,\Omega,\|\delta_1\kappa^{-1}A\|_{0,\gamma},C_0(\alpha,b,c, \Omega, B)),\]
where $\delta_1=a^{-b}<1$. Therefore, setting
\[\bar A={\delta_1}{\kappa}^{-1}A,\quad v_0=\delta_1^{\sfrac{1}{2}}{\kappa}^{-\sfrac12}\underline v,\quad w_0=0\,,\]
gives $\bar A\in C^{0,\gamma}(\bar \Omega,\S), v_0\in C^{\infty}(\bar \Omega), w_0\in C^{\infty}(\bar \Omega,\R^{2})$ and 
\begin{align*} 
&\|\bar A-\frac{1}{2}\nabla v_0\otimes \nabla v_0 -\mathrm{sym} \nabla w_0 -\delta_1 \id\|_{0,\alpha} \leq \tau_0\delta_1 \\
&\|v_0\|_{1}+\|w_0\|_{1}\leq B-\delta_0^{\sfrac12},\\
&\|v_0\|_{2}+\|w_0\|_{2} \leq \delta_0^{\sfrac{1}{2}}\lambda_0.
\end{align*}
We can therefore iteratively apply Proposition \ref{p:stage} with $D=\bar A$ to find  sequences $\{v_q\}_{q\in \N}\subset C^{\infty}(\bar \Omega)$, $\{w_q\}_{q\in \N}\subset C^{\infty}(\bar \Omega,\R^{2})$ satisfying \eqref{e:metricerrorq}--\eqref{e:1estimate} for every $q$. Observe that by interpolation \eqref{e:interpolation} 
\[ \|v_{q+1}-v_q\|_{1,\theta}\leq C \|v_{q+1}-v_q\|_{1}^{1-\theta}\|v_{q+1}-v_q\|_{2}^{\theta}\leq C\delta_{q+1}^{\sfrac{1}{2}}\lambda_{q+1}^{\theta}= Ca^{-b^{q+1}\left (\sfrac{1}{2}-\theta cb\right )}\,.\]
Since $\theta<\frac{1}{2bc}$ by assumption this shows that $\{v_q\}$ is Cauchy in $C^{1,\theta}(\bar \Omega)$ and therefore converges to some $v_\infty$. Moreover, from \eqref{e:1estimate} we see that $w_q\to w_{\infty}$ in $C^{1}$. Passing to the limit in \eqref{e:metricerrorq} shows that 
\[ \bar A=D= \frac{1}{2}\nabla v_{\infty}\otimes \nabla v_{\infty} +\mathrm{sym}\nabla w_\infty\,.\] 
We now claim that if $a$ is chosen large enough (depending on $\kappa$ and $\epsilon$) it holds 
\begin{equation}\label{e:closeness0}
	 \|v_0-v_{\infty}\|_{0}< \delta_1^{\sfrac{1}{2}}{\kappa}^{-\sfrac12}\epsilon \,.
 \end{equation}
Indeed, from \eqref{e:0estimate}, we get
\[
 \|v_0-v_{\infty}\|_{0}\leq \sum_{q=0}^{\infty} \delta_{q+1}^{\sfrac{1}{2}}\lambda_q^{-1}\,. \]
Since $\delta_{q+1}\leq\delta_1$ is monotonically decreasing, \eqref{e:closeness0} is implied by 
\[  \sum_{q=0}^{\infty}\lambda_q^{-1} <\frac{\epsilon}{\sqrt{\kappa}}\,,\] which is easily achieved by choosing $a$ large enough. Setting 
\[\bar v =\delta_1^{-\sfrac{1}{2}}{\kappa}^{\sfrac12}v_{\infty}\]
we have
\[\|\bar v-\underline v\|_0=\delta_1^{-\sfrac{1}{2}}{\kappa}^{\sfrac12}\|v_0-v_\infty\|_0\leq\epsilon,\]
and 
\[\mathcal{D}\text{et}D^2\bar v=-\delta_1^{-1}\kappa~\curl~\curl\bar A=-\curl~\curl A=f,\]
then finish the proof. 

\section{Proof of Lemma \ref{l:decomposition}}\label{s:decom}
 The proof proceeds by inductive construction of $a^{(k)},w^{(k)}$ satisfying \eqref{e:Kaellendecomp}, \eqref{e:decompestimate0} and \eqref{e:decompestimate1}, and then we show that \eqref{e:differrors} holds. In order to do so we need slightly subtler estimates (see \eqref{e:decompestimate2}, \eqref{e:decompestimate3}). 
 
 First of all, $a^{(0)}\in C^{\infty}(\bar \Omega), w^{(0)}\in C^{\infty}\left (\bar \Omega,\R^{2}\right)$ satisfying \begin{equation}\label{e:easydecomp} 
H-\mathrm{sym}\nabla w^{(0)} = a^{(0)} \id 
\end{equation}
can be easily found by Cauchy transform in complex analysis, which has also been established in \cite{CaoSz} in annother way.  Indeed, fix a smooth bounded open set $\tilde \Omega$ such that $\Omega\Subset \tilde \Omega$. The Cauchy transform  
\[ \mathcal C[h](z) := -\frac{1}{\pi}\int_{\tilde \Omega}\frac{h(\omega)}{\omega - z }\,d\omega \] 
satisfies 
\[\partial_{\bar z} \mathcal C[h] = h  \text{ (where $\partial_{\bar z }= \frac{1}{2}(\partial_{x_1} +i\partial_{x_2}$))}\]
 and 
\begin{equation} \label{e:Cauchyestimate}
 \|\mathcal C[h]\|_{l+1,\alpha} \leq C(l,\alpha)\|h\|_{l,\alpha}
\end{equation}
for any $l\in\N$, $f\in C^{l,\alpha}(\tilde \Omega)$. Moreover, on $C^{l,\alpha}_c(\tilde \Omega)$, it commutes with any differential operator of order $l$ (see for example Appendix A in \cite{DIS}). Extend $H$ given in \eqref{d:h} to $H\in C^{\infty}_c(\tilde \Omega , \R^{2\times 2})$,  let $\hat H = H_{11}-H_{22}+i(H_{12}+H_{21})$ and set
\begin{align*}
 w^{(0)}&=\frac{1}{2}\mathcal C[\hat H] \\
 a^{(0)}&=  H_{22}-\partial_{2}w_2^{(0)}
\end{align*}
A quick computation then shows that \eqref{e:easydecomp} is satisfied. Moreover, using \eqref{e:Cauchyestimate} for $h= \widehat{H-\id}$ we get
\begin{equation}\label{e:a00} \|a^{(0)}-1\|_{0,\alpha } + \|w^{(0)}\|_{1,\alpha} \leq C \|H-\id\|_{0,\alpha }\leq C\tau_* <\frac{1}{4}\end{equation}
for $\tau_*$ small enough, and
\begin{equation}\label{e:a0l} \|\partial^{I}(a^{(0)}-1)\|_{0,\alpha} + \|\partial^{I}w^{(0)}\|_{1,\alpha} \leq C\|\partial^{I}(h-\id)\|_{0,\alpha}\leq 
 C(N,\alpha)\sigma^{-|I| }\end{equation}
 for all multi-indices $I$ with $|I|\leq N+3$. 

Now fix a cutoff function $\chi \in C^{\infty}_c(\tilde \Omega)$ with $\chi \equiv 1 $ on $\Omega$ and define inductively $a^{(k)}, w^{(k)}, H^{(k)}$ by 
\begin{align} 
 H^{(k)} & = H +\chi E(a^{(k-1)})\label{d:hk}\\
 w^{(k)} &=\frac{1}{2}\mathcal C[\widehat{H^{(k)}}] \label{d:wk}\\
 a^{(k)} &= H_{22}-\partial_2w_2^{(k)}\,\label{d:ak}
\end{align}
which then yields \eqref{e:Kaellendecomp} due to $E(a^{(k-1)})_{22}= 0$. We now 

{\bf Claim:}  the following estimates hold:
\begin{align}
\| a^{(k)}-1\|_{0,\alpha}+ \|w^{(k)}\|_{1,\alpha}&\leq \frac{1}{2}, \label{e:decompestimate00}  \\
\|\partial_2^{l}\left (a^{(k)} - 1\right ) \|_{0,\alpha} + \|\partial_2^{l}w^{(k)}\|_{1,\alpha} &\leq C(k+1, \alpha)\sigma^{-l}\quad \text{ for }0\leq l \leq N+3-k, \label{e:decompestimate2}
\end{align}
and 
\begin{equation}\label{e:decompestimate3}
	\begin{split}
&\|\partial^{j+1}_1\partial_2^{l}\left (a^{(k)} - 1\right ) \|_{0,\alpha} + \|\partial_1^{j}\partial_2^{l+1}w^{(k)}\|_{1,\alpha}\\
\leq & C(k+1,\alpha)\sigma^{-(l+1)}\mu^{j} \quad \text{ for }1\leq l+j+1 \leq N+3-k\,.
\end{split}
\end{equation}
Observe that in \eqref{e:decompestimate3} we manage to trade a $\partial_1$ derivative on the coefficient $a^{(k)}$ (which is large due to the fast oscillation appearing in $E$) for a ``slower" derivative in $x_2$-direction. This is thanks to the expression \eqref{d:E1} for $a^{(k)}$, since now for example 
\[\partial_{1}a^{(k)} = \partial_1 H_{22}-\partial_{1}\partial_2 w^{(k)}_2 =\partial_1 H_{22}-  \frac{1}{2} \partial_1\mathcal C[\partial_2 \widehat{ H^{(k)}}]_2\,,\] so 
\[\|\partial_{1}a^{(k)}\|_{0,\alpha}\leq C\left ( \|H\|_{1,\alpha} + \|\mathcal C[\partial_2 \widehat{ H^{(k)}}]\|_{1,\alpha}\right )\leq C\left ( \|H\|_{1,\alpha} + \|\partial_2 H^{(k)}\|_{0,\alpha}\right )\,.\]
It is for this reason that we choose to subtract the $e_2\otimes e_2$ components in the definition of the error terms in \eqref{d:E1} and \eqref{d:E3}, since this leads to expression \eqref{d:ak}.

Now we show the claim by induction. Note that the  claim holds for $k=0$, since  $\sigma^{-1}\leq \mu $ (due to $\mu^{\alpha-1}\sigma^{-1}<\tau_*<1$). Assume that the claim is true for $k-1$ case. Then $E(a^{(k-1)})$ (and therefore $H^{(k)}$, $w^{(k)}$ and $a^{(k)}$) is well-defined thanks to \eqref{e:decompestimate00}. Let us show that \eqref{e:decompestimate00}-\eqref{e:decompestimate3} hold for $k$. Firstly, we find
 \begin{align*}
 	&\| a^{(k)}-1\|_{0,\alpha}+ \|w^{(k)}\|_{1,\alpha} \\
 	\leq &C\left (\|H-\id\|_{0,\alpha } + \|\chi\|_{0,\alpha}\|E(a^{(k-1)})\|_{0,\alpha}\right )\\
 	\leq &\frac{1}{4}+ C\|E(a^{(k-1)})\|_{0,\alpha} 
 \end{align*}
 because of \eqref{e:a00}. So we shall estimate $\|E(a^{(k-1)})\|_{0,\alpha}$. By the induction assumption  \eqref{e:decompestimate00}-\eqref{e:decompestimate3} with $k$ replaced by $k-1$, we immediately have  
 \[ \|\partial^{j_1}_1\partial^{l_1}_2 a^{(k-1)}\ldots \partial^{j_m}_1\partial_2^{l_{m}} a^{(k-1)}\|_{0,\alpha} \leq C \mu^{j_1-1}\sigma^{-(l_1+1)}\mu^{j_2}\sigma^{-l_2}\cdots \mu^{j_m}\sigma^{-l_m} \leq C\mu^{j}\sigma^{-(l+1)}\,\]   
 for any  $j_1>j_2\geq\ldots\geq j_m$ and $l_1\geq \ldots \geq l_m$ with $\sum_i j_i = j+1$, $\sum_i l_i = l$, using $k,m\leq N+3$.  Since $\|(a^{(k-1)})^{-m/2} \|_{0,\alpha}\leq C $ for $m\leq N+3$, we therefore find by Fa\`a di Bruno's formula 
 \begin{align}
  \|\partial_2^{l } \sqrt{a^{(k-1)}}\|_{0,\alpha} &\leq C(N,\alpha) \sigma^{-l} \quad  \quad \text{ for } l=0,\ldots, N+3-(k-1)\label{e:sqrtestimate0}\\ 
  \|\partial_1^{j+1}\partial_2^{l} \sqrt{a^{(k-1)}}\|_{0,\alpha} &\leq C(N,\alpha)\mu^{j}\sigma^{-(l+1)} \quad \text{ for } 0\leq l+j+1\leq N+3-(k-1).
  \label{e:sqrtestimate1}
  \end{align} 
Thus we can estimate
\begin{equation}\label{e:E1estimate}
	\begin{split} 
		&\|E_1(a^{(k-1)})\|_{0,\alpha} \leq C\left (\mu^{\alpha-1} \|M\|_{0}+ \mu^{-1}\|M\|_{0,\alpha}\right )\\
		\leq& C\left (\mu^{\alpha-1}\sigma^{-1} +\mu^{-1}\sigma^{-1-\alpha}\right ) \leq C\mu^{\alpha-1}\sigma^{-1}\,,
	\end{split}
\end{equation}
due to \eqref{e:Mestimates}, \eqref{e:sqrtestimate0} and $\sigma^{-1}\leq \mu$, and similarly 
\begin{equation}\label{e:E2estimate} \|E_2(a^{(k-1)})\|_{0,\alpha}\leq C \mu^{\alpha-1}\sigma^{-1} \end{equation} by the induction assumption. Also 
\begin{equation}\label{e:E3estimate} \|E_3(a^{(k-1)})\|_{0,\alpha}\leq C\mu^{\alpha-2}\sigma^{-2} \leq C\mu^{\alpha-1}\sigma^{-1} \,,\end{equation} 
by \eqref{e:sqrtestimate0} and \eqref{e:sqrtestimate1},  so that \eqref{e:decompestimate00} is true for sufficiently small $\tau_*(N,\alpha)$. As for the estimate \eqref{e:decompestimate2}, we have
\begin{align*} 
\|\partial_2^{l}\left (a^{(k)} - 1\right ) \|_{0,\alpha} + \|\partial_2^{l}w^{(k)}\|_{1,\alpha} &\leq C\left ( \|\partial_2^{l}(H-\id)\|_{0,\alpha} + \|\partial^{l}_2 E(a^{(k-1)})\|_{0,\alpha} \right )\\
& \leq C\left ( \sigma^{-l} + \sum_i \|\partial_2^{l}E_i(a^{(k-1)})\|_{0,\alpha}\right )\,.
\end{align*}
By the induction assumptions and \eqref{e:hestimates}, \eqref{e:Mestimates}, \eqref{e:sqrtestimate0}, \eqref{e:sqrtestimate1}, we get
\begin{align} \|\partial_2^{l}E_1(a^{(k-1)})\|_{0,\alpha}&\leq C(k,\alpha)\left (\mu^{\alpha-1}\|\partial^{l}_2(\sqrt{a^{(k-1)}}M)\|_0+\mu^{-1}\|\partial^{l}_2(\sqrt{a^{(k-1)}}M)\|_{0,\alpha}\right ) \nonumber\\
&\leq C(k,\alpha)\left ( \mu^{\alpha-1}\sigma^{-l-1} + \mu^{-1}\sigma^{-l-1-\alpha}\right )\leq C(k,\alpha)\mu^{\alpha-1}\sigma^{-1-l}\,,\label{e:derE1estimate}  \end{align} 
and 
\begin{equation}\label{e:derE2estimate}
\|\partial_2^{l}E_2(a^{(k-1)})\|_{0,\alpha} \leq C(k,\alpha)\mu^{\alpha-1}\sigma^{-(l+1)} \leq C(k,\alpha)\mu^{\alpha-1}\sigma^{-1} \sigma^{-l}\,,
\end{equation}
and a similar estimate for $E_3(a^{(k-1)})$. Putting these estimates together gives \eqref{e:decompestimate2}. The estimate \eqref{e:decompestimate3} follows analogously, using the product rule for the $\partial_1$ derivatives. Consequently, the claim is proved.

It now remains to show \eqref{e:differrors}. We also proceed by inductively showing that for any $k=1,\ldots, N$ it holds 
\begin{align}
\| a^{(k)}-a^{(k-1)}\|_{0,\alpha}+ \|w^{(k)}-w^{(k-1)}\|_{1,\alpha}&\leq C(N,\alpha)(\mu^{\alpha-1}\sigma^{-1})^k\,, \label{e:decompestimate20}
\end{align}
\begin{equation}\label{e:decompestimate22}
	\begin{split}
		&\|\partial_2^{l}\left (a^{(k)} - a^{(k-1)}\right ) \|_{0,\alpha} + \|\partial_2^{l} \left (w^{(k)}-w^{(k-1)}\right )\|_{1,\alpha} \\
		\leq &C(N, \alpha)(\mu^{\alpha-1}\sigma^{-1})^k\sigma^{-l} \quad \text{ for } 0\leq l \leq N+3-k
	\end{split}
\end{equation}
and 
\begin{equation}\label{e:decompestimate23}
	\begin{split}
		&\|\partial^{j+1}_1\partial_2^{l}\left (a^{(k)} - a^{(k-1)}\right ) \|_{0,\alpha} + \|\partial_1^{j}\partial_2^{l+1}\left (w^{(k)}-w^{(k-1)}\right )\|_{1,\alpha} \\
		\leq & C(N,\alpha)(\mu^{\alpha-1}\sigma^{-1})^k\sigma^{-(l+1)}\mu^{j}, \quad \text{ for }1\leq l+j+1 \leq N+3-k\,. 
		\end{split}
\end{equation}
Indeed, observe that 
\[a^{(1)}-a^{(0)}= \partial_{2}\left (w^{(1)}-w^{(0)}\right ) = \frac{1}{2}\partial_2\mathcal{C}[ \widehat {E(a^{(0)})}]\,\]
so that the estimates for $k=1$ follow from the corresponding ones above (see \eqref{e:E1estimate}--\eqref{e:E3estimate}, resp. \eqref{e:derE1estimate}--\eqref{e:derE2estimate}). Assume therefore that \eqref{e:decompestimate20}--\eqref{e:decompestimate23} hold for $k$. From 
\begin{align}a^{(k+1)}-a^{(k)}&= \partial_{2}\left (w^{(k+1)}-w^{(k)}\right ) = \frac{1}{2}\partial_2\mathcal{C}\left [ \chi\left (\widehat{ E(a^{(k)})}- \widehat {E( a^{(k-1)})}\right)\right ] 
\nonumber\\
& = \frac{1}{2}\mathcal{C}\left [\partial_2 \left (\chi\left (\widehat{ E(a^{(k)})}- \widehat{ E(a^{(k-1)})}\right )\right )\right ],\label{e:expressionfordifferences}\,\end{align}
it is apparent that in order to show \eqref{e:decompestimate20}--\eqref{e:decompestimate23} we need to estimate \[ \|\partial_2^{l}\left (E(a^{(k)})-E(a^{(k-1)})\right )\|_{0,\alpha} \text{   and } \|\partial_1^{j}\partial_2^{l+1}\left (E(a^{(k)})-E(a^{(k-1)})\right )\|_{0,\alpha} \]  in the given ranges of indices. To do this, recall that by definition \eqref{d:E} we have 
\[ E(a^{(k)})-E(a^{(k-1)}) = \sum_{i=1}^{3}\left (E_i(a^{(k)})-E_i(a^{(k-1)})\right )=\sum_{i=1}^3\frac{\bar\gamma_i(\mu x_1)}{\mu}A_i,\]
where 
\begin{align*}
	\bar\gamma_1=\gamma_1,\quad A_1=&(\sqrt{a^{(k)}}-\sqrt{a^{(k-1)}})(M-M_{22}e_2\otimes e_2),\\
	\bar\gamma_2=-\gamma_3,\quad 
	A_2=&\nabla({a^{(k)}}-{a^{(k-1)}})\odot e_1,\\
	\bar\gamma_3=-\frac{\gamma_1^2}{2\mu},\quad
	A_3=&\nabla \sqrt{a^{(k)}}\otimes\nabla\sqrt{a^{(k)}}-\nabla \sqrt{a^{(k-1)}}\otimes\nabla\sqrt{a^{(k-1)}}\\
	&+\left((\partial_2\sqrt{a^{(k-1)}})^2-(\sqrt{a^{(k)}})^2\right)e_2\otimes e_2.
\end{align*}
By the Leibniz rule we get for each term and indices $0\leq m+n\leq N$
\begin{equation}\label{e:Leibnitzestimate} \|\partial_1^{m}\partial_2^{n}\left(\frac{\bar\gamma_i(\mu x_1)}{\mu}A_i\right)\|_{0,\alpha} \leq C(N,\alpha)\sum_{j=0}^{m} \mu^{m-j-1}\left (\mu^{\alpha}\|\partial_1^{j}\partial_2^{n}A_i\|_{0}+\|\partial_1^{j}\partial_2^{n}A_i\|_{0,\alpha}\right ).\end{equation} 
Now for the second term, by induction assumption we immediately find 
\begin{align*}
	\|\partial_1^{m}\partial_2^{n}\left(\frac{\bar\gamma_2(\mu x_1)}{\mu}A_2\right)\|_{0,\alpha} \leq &C(N,\alpha)\sum_{l=1}^{m} \mu^{m-j-1}\mu^{\alpha}\sigma^{-(n+1)}\mu^{j}(\mu^{\alpha-1}\sigma^{-1})^k\\
	\leq& C(N,\alpha) (\mu^{\alpha-1}\sigma^{-1})^{k+1}\mu^{m}\sigma^{-n}\,.
	\end{align*}
To estimate the other terms, we write 
\[ \sqrt{a^{(k)}}-\sqrt{a^{(k-1)}} = \frac{a^{(k)}-a^{(k-1)}}{\sqrt{a^{(k)}}+\sqrt{a^{(k-1)}}}.\]
Thanks to \eqref{e:decompestimate00}, \eqref{e:sqrtestimate0} and \eqref{e:sqrtestimate1}, it holds 
\begin{align*} \|\partial_2^{l}\left (\sqrt{a^{(k)}}+\sqrt{a^{(k-1)}}\right )^{-1}\|_{0,\alpha}&\leq C(N,\alpha)\sigma^{-l}\\
\|\partial_1^{j+1}\partial_2^{l}\left (\sqrt{a^{(k)}}+\sqrt{a^{(k-1)}}\right )^{-1}\|_{0,\alpha}&\leq C(N,\alpha) \sigma^{-(l+1)}\mu^{j}\,.\end{align*}
Thus by induction assumption we find 
\begin{equation}\label{e:sqrtdifferences}
	\begin{split} \|\partial_2^{l}\left (\sqrt{a^{(k)}}-\sqrt{a^{(k-1)}}\right )\|_{0,\alpha}&\leq C(N,\alpha)(\mu^{\alpha-1}\sigma^{-1})^k\sigma^{-l}\\
\|\partial_1^{j+1}\partial_2^{l}\left( \sqrt{a^{(k)}}-\sqrt{a^{(k-1)}}\right )\|_{0,\alpha}&\leq C(N,\alpha) (\mu^{\alpha-1}\sigma^{-1})^k\sigma^{-(l+1)}\mu^{j}\,.
\end{split}
\end{equation}
Note that 
\begin{align*} 
	A_3 =&\nabla \sqrt{a^{(k)}}\otimes\left (\nabla \sqrt{a^{(k)}}-\nabla \sqrt{a^{(k-1)}}\right )+
\left (\nabla \sqrt{a^{(k)}}-\nabla \sqrt{a^{(k-1)}}\right )\otimes\nabla\sqrt{a^{(k-1)}}\\ 
& \quad-\left (\partial_2\sqrt{a^{(k)}}-\partial_2\sqrt{a^{(k-1)}}\right ) \left( \partial_2\sqrt{a^{(k)}}+\partial_2\sqrt{a^{(k-1)}}\right )e_2\otimes e_2\,.\end{align*}
Using \eqref{e:sqrtdifferences} and product rule, we can derive 
\[ \|\partial^{j}_1\partial_2^{n}A_3\|_{0,\alpha}\leq C(N,\alpha)(\mu^{\alpha-1}\sigma^{-1})^k\sigma^{-(n+1)-1}\mu^{j}\,.\]
Thus with \eqref{e:Leibnitzestimate} we find
\begin{align*} 
	\|\partial_1^{m}\partial_2^{n}\left(\frac{\bar\gamma_3(\mu x_1)}{\mu}A_3\right)\|_{0,\alpha} &\leq C(N,\alpha)\sum_{j=0}^{m}\mu^{m-j-2+\alpha}(\mu^{\alpha-1}\sigma^{-1})^k\sigma^{-(n+1)-1}\mu^{j} \\
	&\leq C(N,\alpha) (\mu^{\alpha-1}\sigma^{-1})^{k+1}\mu^{m-1}\sigma^{-(n+1)}\\
&\leq C(N,\alpha)(\mu^{\alpha-1}\sigma^{-1})^{k+1} \mu^{m}\sigma^{-n}\,.
\end{align*}
As for the first term, by assumption \eqref{e:Mestimates} and \eqref{e:sqrtdifferences} we can estimate for $\alpha\in [0,\beta[$
\[ \|\partial_1^{j}\partial_2^{n} A_1\|_{0,\alpha} \leq C(N,\alpha)(\mu^{\alpha-1}\sigma^{-1})^{k} \sigma^{-n-1-\alpha} \mu^{j}\,,\]
which yields
\begin{align*}
	\|\partial_1^{m}\partial_2^{n}\left(\frac{\bar\gamma_1(\mu x_1)}{\mu}A_1\right)\|_{0,\alpha}&\leq C(N,\alpha)(\mu^{\alpha-1}\sigma^{-1})^{k}\mu^{m-1}\sigma^{-n-1}\left (\mu^{\alpha}+\sigma^{-\alpha}\right )\\
	&\leq C(N,\alpha)(\mu^{\alpha-1}\sigma^{-1})^{k}
	\mu^{m-1+\alpha}\sigma^{-n-1} \\
&= C(N,\alpha)(\mu^{\alpha-1}\sigma^{-1})^{k+1}\mu^{m}\sigma^{-n}\,.\end{align*}
Combining the above estimates hence gives 
\begin{equation}\label{e:errordifferences}
	\|\partial_1^{m}\partial_2^{n}\left (E(a^{(k)})-E(a^{(k-1)})\right )\|_{0,\alpha}\leq C(N,\alpha)(\mu^{\alpha-1}\sigma^{-1})^{k+1}\mu^{m}\sigma^{-n}\,,\end{equation}
which together with the expression \eqref{e:expressionfordifferences} shows that \eqref{e:decompestimate20}--\eqref{e:decompestimate23} hold for every $k=0,\ldots, N$. Consequently \eqref{e:differrors} follows from \eqref{e:errordifferences}. 
The proof is then complete. 
\bibliographystyle{plain}

\end{document}